\documentclass[10pt,amsfonts, epsfig]{amsart}
\usepackage{amsmath, amscd, amssymb}
\usepackage{graphpap, color}
\usepackage[mathscr]{eucal}
\usepackage{pstricks}
\usepackage{color}
\usepackage{cancel}
\usepackage[mathscr]{eucal}
\usepackage{pstricks}
\usepackage{color}
\usepackage{cancel}
\usepackage{verbatim}

\def\fC{{\mathbf C}}

%


\numberwithin{equation}{section}

\newcommand{\Spec}{\operatorname{Spec}}

\def\sO{{\mathscr O}}

\def\sO{\mathscr{O}}

\def\sF{\mathscr{F}}
\def\sV{\mathscr{V}}

\newcommand{\LL}{\mathbb{L}}


\newcommand{\bk}{\mathbf{k}}

\newcommand{\kk}{\bk}


\newcommand{\cal}{\mathcal}

\def\cC{{\cal C}}
\def\cD{{\cal D}}
\def\cE{{\cal E}}

\def\cM{{\cal M}}

\def\cO{{\cal O}}


\def\fC{\mathfrak{C}}
\def\fD{\mathfrak{D}}






\def\mapright#1{\,\smash{\mathop{\lra}\limits^{#1}}\,}





\def\dual{^{\vee}}

\def\sta{^\ast}

\def\virt{^{\mathrm{vir}}}
\def\upmo{^{-1}}
\def\sta{^{\ast}}

\def\mm{{\mathfrak m}}
\def\sta{^*}


\def\lbe{_{\beta}}
\def\lra{\longrightarrow}

\def\lsta{_{\ast}}



\def\begeq{\begin{equation}}
\def\endeq{\end{equation}}
\def\and{\quad{\rm and}\quad}
\def\bl{\bigl(}
\def\br{\bigr)}
\def\defeq{:=}

\def\sub{\subset}

\def\Po{{\mathbb P^1}}
\def\and{\quad\text{and}\quad}

\def\lalp{_\alpha}

\def\ob{\text{ob}}


 \DeclareMathOperator{\Ext}{Ext}
  \DeclareMathOperator{\Hom}{Hom}

\DeclareMathOperator{\id}{id}

 \DeclareMathOperator{\rank}{rank}



\newtheorem{prop}{Proposition}[section]
\newtheorem{theo}[prop]{Theorem}
\newtheorem{lemm}[prop]{Lemma}
\newtheorem{coro}[prop]{Corollary}

\newtheorem{defi}[prop]{Definition}

\newtheorem{defi-theo}[prop]{Definition-Theorem}
\newtheorem{defi-prop}[prop]{Definition-Proposition}


\DeclareMathOperator{\coker}{coker}

\def\Ob{\cO b}

\def\lalpbe{_{\alpha\beta}}
\def\bul{^\bullet}

\def\sta{^\ast}

\let\label=\label

\def\sO{{\mathscr O}}

\def\beq{\begin{equation}}
\def\eeq{\end{equation}}

\def\vsp{\vskip5pt}

\def\fc{\mathfrak c}

\def\bR{\mathbf R}
\def\psta{^{\prime\ast}}

\def\bee{\begin{equation}}
\def\eeq{\end{equation}}

\def\bH{{\mathbf H}}

\def\ti{\tilde}

\def\Lam{{\Lambda}}





\def\lred{_{\mathrm{red}}}

\def\pdual{^{\prime\vee}}

\let\label=\label

\title[Semi perfect obstruction theory and DT invariants]
{Semi-perfect obstruction theory and DT invariants of derived objects}

\date{}

\author{Huai-liang Chang and }
\address{Department of Mathematics, Hong Kong University of Science and Technology}
\email{mahlchang@ust.hk}

\author{Jun Li}
\address{Department of Mathematics, Stanford University} \email{jli@math.stanford.edu}

\begin{document}
\maketitle
 
\begin{abstract}
We introduce a semi-perfect obstruction theory of a Deligne-Mumford stack $X$ that consists of local perfect obstruction 
theories with a global obstruction sheaf.
We construct the virtual cycle of a Deligne-Mumford stack with a semi-perfect obstruction theory.
We use semi-perfect obstruction theory to construct virtual cycles of
moduli of derived objects on Calabi-Yau threefolds. 
\end{abstract}

\section{Introduction}
\def\fOb{\mathfrak{Ob}}
In this note, we introduce the notion of semi-perfect obstruction of a Deligne-Mumford stack. This notion 
has the advantage of the two perfect obstruction theories introduced in \cite{LT} and \cite{BF}, by combining
the local version of the perfect obstruction theory 
formulated in \cite{BF} with the locality of the virtual normal cone proved in \cite{LT}. This construction
applies to moduli spaces that do not have universal families, like moduli of derived objects;
it makes working with virtual cycles flexible.

Let $X\to\cM$ be a representable morphism from a Deligne-Mumford stack to a smooth Artin stack of pure dimension.
A semi-perfect relative obstruction theory $\phi$ of $X\to \cM$ consists of an \'etale cover $U\lalp\to X$ and (truncated) perfect obstruction
theory $\phi\lalp: E\lalp\to \LL_{U\lalp/\cM}^{\bullet\ge -1}$ 
such that the obstruction sheaves
$\Ob_{\phi\lalp}= H^1(E\lalp\dual)$ descend to an obstruction sheaf on $X$, and that the infinitesimal 
obstruction assignments of deforming closed points in $X$ are independent of the charts $U\lalp$.

We denote by $\Ob_\phi$ the sheaf stack of the obstruction sheaf $\Ob_\phi$ of the semi-obstruction\ theory $\phi$.
We let $s$ be the zero section of $\Ob_\phi$.
We will make sense of the group of cycles $Z\lsta \Ob_\phi$, and construct a Gysin map $s^!: Z\lsta \Ob_\phi\to A\lsta X$.

%

\begin{theo}
Suppose $X\to\cM$ as stated has a 
semi-perfect relative obstruction theory $\phi$. 
Then the collection of intrisic normal cones 
$\cC_{U\lalp/\cM}\sub h^1/h^0((\LL_{U\lalp/\cM}^\bullet)\dual) )$
pushforward and glue to a cycle 
$[\fc_{X/\cM}]\in Z\lsta \Ob_\phi$.
We define the virtual class of $(X,\phi)$ be
$$[X,\phi]\virt=s^![\fc_{X/\cM}]\in A\lsta X.
$$
This class has the usual properties satisfied by virtual classes.
\end{theo}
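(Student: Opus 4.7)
The plan is to prove the theorem in three stages: local construction via Behrend--Fantechi on each chart, gluing of the resulting cycles on overlaps, and verification of formal properties via reduction to the local situation.

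First, on each étale chart $U\lalp \to X$, the perfect obstruction theory $\phi\lalp : E\lalp \to \LL_{U\lalp/\cM}^{\bullet \ge -1}$ yields by the Behrend--Fantechi construction an embedding of the intrinsic normal cone $\cC_{U\lalp/\cM}$ into the vector bundle stack $h^1/h^0(E\lalp\dual)$. Pushing forward along the natural surjection $h^1/h^0(E\lalp\dual) \twoheadrightarrow \Ob_{\phi\lalp}$ (killing the $h^0$-part on the level of coarse cycles), I obtain a cycle $[\fc\lalp]$ in the sheaf stack $\Ob_{\phi\lalp}$ on $U\lalp$. The first step is to make precise the group $Z\lsta \Ob_\phi$ of cycles on the sheaf stack $\Ob_\phi$ and the pushforward map on cycles induced by the surjection from a vector bundle stack to its $H^1$-sheaf.

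Second, and this is the core of the theorem, I would glue the local cycles $[\fc\lalp]$ to a global cycle $[\fc_{X/\cM}] \in Z\lsta \Ob_\phi$. On an overlap $U\lalpbe := U\lalp \times_X U_\beta$, both $\cC_{U\lalp/\cM}|_{U\lalpbe}$ and $\cC_{U_\beta/\cM}|_{U\lalpbe}$ coincide with the intrinsic normal cone $\cC_{U\lalpbe/\cM}$, since the intrinsic normal cone is functorial for étale morphisms. The two local perfect obstruction theories need not be quasi-isomorphic on the overlap, but the hypothesis that obstruction sheaves $\Ob_{\phi\lalp}$ descend to $\Ob_\phi$ provides a canonical identification of their images, and the hypothesis on infinitesimal obstruction assignments at closed points is exactly what is needed to ensure that the pushforwards of $\cC_{U\lalp/\cM}$ and $\cC_{U_\beta/\cM}$ into $\Ob_\phi|_{U\lalpbe}$ agree as cycles, not merely as closed substacks of the ambient sheaf stack. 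The key input here is a cycle-level version of the statement in \cite{LT} that the virtual normal cone is intrinsic to $X \to \cM$, upgraded so that the comparison with the obstruction assignment pins down not only the support but also the multiplicities along each irreducible component.

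Third, I would construct $s^!: Z\lsta \Ob_\phi \to A\lsta X$ by specifying it on each cycle supported in an étale chart via the Behrend--Fantechi Gysin map for the local perfect obstruction theory, and checking that this passes to the étale topology because it does so locally and the local specializations agree on overlaps by the compatibility just proved. Applied to $[\fc_{X/\cM}]$ this gives $[X,\phi]\virt$. Finally, I would verify the usual properties: functoriality under étale pullback is automatic from the construction; deformation invariance, compatibility with product and torus localization follow by reducing each statement to the corresponding theorem of Behrend--Fantechi on a chart and using the gluing to patch.

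The main obstacle I anticipate is the gluing step: specifically, upgrading the agreement of infinitesimal obstruction assignments at closed points to agreement of cycles on overlaps. This requires a careful analysis of how the multiplicity of a component of $\cC_{U\lalp/\cM}$ in $\Ob_\phi|_{U\lalp}$ is determined purely by first-order obstruction data, so that the hypothesis in the definition of semi-perfect obstruction theory is both necessary and sufficient for the gluing to produce a well-defined element of $Z\lsta \Ob_\phi$.
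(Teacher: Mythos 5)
Your first two stages match the paper's actual argument. The paper's Section 2 proves exactly the comparison lemma you identify as the crux: if two truncated perfect obstruction theories on the same chart are ``$\nu$-equivalent'' (their obstruction sheaves are identified compatibly with the obstruction classes of all infinitesimal lifting problems at closed points), then the pushforwards of any integral cycle of $N_{U/\cM}$ into the two obstruction sheaf stacks coincide. The multiplicity worry you flag as the main obstacle turns out to be resolved cheaply: the pushforward of an integral substack of $h^1/h^0(E\dual)$ to the sheaf stack $h^1(E\dual)$ is again integral (this is the first Lemma of Section 2), so the coefficients in $\eta_{\phi\ast}[\cC_{U/\cM}]$ are inherited from the intrinsic decomposition of $[\cC_{U/\cM}]$, which does not depend on $\phi$ at all. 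What remains is to show the two image \emph{supports} agree, and since both are reduced Zariski-closed subsets of a vector bundle surjecting onto the obstruction sheaf, this can be checked fiberwise over closed points; there it reduces to commutativity of the square comparing $H^1(\phi\dual|_p)$ and $H^1(\phi^{\prime\vee}|_p)$, which follows from the $\nu$-equivalence because every vector of $T^1_{p,U/\cM}$ is realized as an obstruction class of some lifting problem (\cite[Prop.\ 4.7]{BF}). So your gluing step is the right idea and is carried out in the paper essentially as you describe.

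Your third stage, however, has a genuine gap. You propose to define $s^!: Z\lsta\Ob_\phi\to A\lsta X$ chart by chart and then ``check that this passes to the \'etale topology.'' Chow groups do not satisfy \'etale descent: a class in $A\lsta X$ cannot be assembled from classes on an \'etale cover by checking agreement on overlaps, and a general integral cycle of $\Ob_\phi$ is not supported over any single chart, so there is no local class to start from. The paper's construction is genuinely global: for an integral cycle $A$ with support closure $Y\sub X$, one chooses a projective variety $S$ with a generically finite $f:S\to Y$ of degree $e$, a locally free sheaf $\sV$ on $S$ surjecting onto $f\sta\sF$, forms the closure $D_A\sub\sV$ of the preimage of $A$ over an open locus where $f$ is \'etale, and sets $s^![A]=e\upmo f\lsta(0^!_{\sV}[D_A])$; well-definedness is proved by dominating any two such ``proper representatives'' by a third. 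Without this (or an equivalent globalization device, e.g.\ a global presentation of the obstruction sheaf), your construction of $s^!$, and hence of $[X,\phi]\virt$, does not go through. Separately, the deformation invariance in your last stage does not simply ``reduce to Behrend--Fantechi on a chart'': the paper must push Vistoli's rational equivalence into the sheaf stack and verify that the boundary operator $\partial$ on $W\lsta\sF$ is compatible with $s^!$, which again relies on the proper-representative formalism.
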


Using the construction of the moduli spaces of derived objects by
Inaba \cite{Inaba} and Lieblich \cite{Lieb}, and the perfect obstruction of derived objects over Calabi-Yau threefolds by
Huybrecht-Thomas \cite{HT}, we show that the moduli of derived objects has a semi-perfect obstruction theory. Applying the
main theorem, we obtain its virtual cycle, whose degree is the Donaldson-Thomas invariant of the moduli space,
and its deformation invariance property.


\vsp
The new input of this paper is that we put the 
cycle associated to the coarse 
moduli of the intrinsic normal cone of a Deligne-Mumford stack into its
obstruction sheaf, assuming the existence of a semi-perfect obstruction theory.
Historically, the virtual cycle construction of Tian and the second named author \cite{LT}  constructed the virtual normal
cone in a vector bundle that surjects onto the obstruction sheaf; that of Behrend-Fantechi \cite{BF} constructed the intrinsic
normal cone in the normal sheaf of the stack and then put it in the bundle-stack $h^1/h^0$ of the obstruction theory. After 
Kresch's \cite{Kre}
construction, one can apply the Gysin map to a cycle in the bundle-stack 
directly, bypassing
the need of the mentioned global vector bundles. Later,
in \cite{Li2} and  \cite{Li3} the intrinsic normal cone was implicitly put inside the obstruction sheaf. For moduli spaces that lack universal
families but do have obstruction sheaves (of their semi-perfect obstruction theories), putting their intrinsic normal cones in the obstruction sheaves
allows one to apply the Gysin map to obtain their virtual cycles.

\vsp
\noindent
{\bf Convention}. In this paper, for a morphism $\rho: V\to U$ that is either a closed embedding, an
\'etale morphism, or a composition of both, we will use restricting to $V$, i.e. $\cdot |_V$,
to denote the pullback under $\rho$. For instance, given a derive
object $E$ on $U$, we denote by $E|_V=\rho\sta E$; for a stack $Y$ over $U$, we
denote by $Y|_V=Y\times_U V$, and for $\phi: E\to F$ a homomorphism of derived objects on $U$,
we denote by $\phi|_V$ the pullback homomorphism $\rho\sta \phi: \rho\sta E\to \rho\sta F$.
 
We fix a characteristic zero algebraically closed field $\kk$. All schemes and stacks in this paper are 
defined over $\kk$.

\vsp
\noindent{\bf Acknowledgment}: We thank T. Nevins  and S. Katz for pointing out an oversight of the original version of 
this paper. The first named author is supported by a Hong Kong DAG grant. 
The second named author is partially supported by
an NSF grant and a DARPA grant. 

\section{Intrinsic normal cones in the obstruction sheaves} 
\def\Coh{\, \mathrm{Coh}}

In this section, we put the intrinsic normal cone in the obstruction sheaf, and show that it only depends on the
the obstruction class assignments.
%

We let $\cM$ be as before, which is a pure dimensional smooth Artin stack of finite presentations.
Let $U\to \cM$ be a morphism from a scheme of finite type
to $\cM$.

\begin{defi}[\cite{BF}]\label{truncated}
A (truncated) perfect (relative) obstruction theory of $U\to \cM$ consists of a morphism in $D(U)$ 
\beq\label{ob-1}
\phi_{}:E_{} \lra L_{U/\cM}
\eeq
such that
\begin{enumerate}
\item $E_{}$ is a perfect complex in $D(U)$ of amplitude contained in $[-1,0]$;
\item $h^0(\phi_{})$ is an isomorphism, and $h^{-1}(\phi_{})$ is surjective.
\end{enumerate}
Given $\phi$, we call $\Ob_\phi\defeq h^1(E\dual)$ its obstruction sheaf.
\end{defi}
  
Following \cite{BF} and \cite{Kresch,Kre}, we denote the intrinsic normal cone and the intrinsic normal sheaf of $U/\cM$ by
\beq\label{cone-inclusion}
\cC_{U/\cM}\sub N_{U/\cM}\defeq  h^1/h^0((L_{U/\cM})\dual)\cong h^1/h^0((\LL_{U/\cM})\dual).
\eeq
We denote its associated cycle by 
$[\cC_{U/\cM}] \in Z\lsta N_{U/\cM}$. 
Let
\beq\label{embed}
h^1/h^0(\phi_{}\dual): N_{U/\cM}\mapright{\subset} h^1/h^0(E_{}\dual)
\eeq
be induced by the truncated perfect obstruction theory.

In this paper, for a coherent sheaf $\sF$ of $\sO_U$-modules. We form the sheaf stack of $\sF$, which 
is the groupoid that associates to any $\rho: S\to U$ the set $\Gamma(S, \rho\sta \sF)$. By abuse of notation,
we denote the sheaf stack of $\sF$ by the same symbol $\sF$.
And following the $h^1/h^0$ notation, we denote by $h^1(E\dual)$ the sheaf stack of the cohomology sheaf
$H^1(E\dual)$.
Applying this to $\Ob_\phi=H^1(E\dual)$ and composed with \eqref{embed}, 
we have the induced composite morphism of stacks
\beq\label{eta}
\eta_\phi: N_{U/\cM}\lra h^1/h^0(E\dual)\lra h^1(E\dual).
\eeq

\begin{defi}\label{integral}
We call a substack $A\sub \sF$
a reduced cycle if for any locally free sheaf $\sV$ of $\sO_U$-modules and a surjective $f: \sV\to \sF$, and denoting by the same
$f$ the induced morphism of their respective stacks $\sV\to \sF$, 
$\sV\times_{\sF} A$ is a reduced Zariski closed subset of $\sV$. Given three reduced cycles $A_1$, $A_2$ and $A_3$ of
$\sF$, we say $A_3=A_1\cup A_2$ if for the $f: \sV\to \sF$ as before,
 $\sV\times_{\sF} A_3=\sV\times_{\sF} A_1\cup \sV\times_{\sF} A_2$ as subschemes of $\sV$.
We say $A$ is integral if it is reduced and is
not a union of two distinct non-empty reduced cycles of $\sF$.
We define $Z\lsta \sF$ to be the (rational) linear combinations of integral cycles in $\sF$.
\end{defi}

\begin{lemm}
Let $A\sub h^1/h^0(E\dual)$ be an integral cycle. Then the image $\eta_\phi(A)\sub h^1(E\dual)$ is an integral
cycle in $h^1(E\dual)$. We call the cycle $[\eta_\phi(A)]$ the push-forward of $[A]$, and denote it by
$\eta_{\phi\ast}[A]\in Z\lsta h^1(E\dual)$.
\end{lemm}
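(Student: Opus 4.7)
The plan is to reduce the global statement to a local computation with a two-term locally free presentation of $E^\vee$ and then use the fact that the map $h^1/h^0(E^\vee)\to h^1(E^\vee)$ is a quotient map by an action that already preserves the preimage of $A$. Concretely, working Zariski-locally on $U$, I represent $E$ by $[E^{-1}\to E^0]$ with $E^{-1},E^0$ locally free, so that $E^\vee\simeq [E^{0\vee}\stackrel{d}{\to}E^{-1\vee}]$. Then $h^1/h^0(E^\vee)=[E^{-1\vee}/E^{0\vee}]$, where $E^{0\vee}$ acts on $E^{-1\vee}$ by translation through $d$, while the sheaf stack $h^1(E^\vee)$ is the coherent cokernel $\coker(d)$. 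In this local picture the canonical map $h^1/h^0(E^\vee)\to h^1(E^\vee)$ is the evident quotient $[E^{-1\vee}/E^{0\vee}]\to \coker(d)$.

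Next, for a reduced cycle $A\subset h^1/h^0(E^\vee)$, let $A'\subset E^{-1\vee}$ denote its preimage under the smooth atlas $q\colon E^{-1\vee}\to [E^{-1\vee}/E^{0\vee}]$. By Definition of reduced cycle, $A'$ is a reduced Zariski-closed subscheme, and it is $E^{0\vee}$-invariant by construction. The key observation is that, because the fibres of the quotient $p\colon E^{-1\vee}\to \coker(d)$ are exactly the $E^{0\vee}$-orbits on $E^{-1\vee}$, one has
\[
p^{-1}\bl\eta_\phi(A)\br = A',
\]
so the set-theoretic image $\eta_\phi(A)\subset h^1(E^\vee)$, endowed with the structure inherited from $A'$ via $p$, is a well-defined reduced closed substack. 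To verify that $\eta_\phi(A)$ is a reduced cycle in the sense of Definition \ref{integral}, I take an arbitrary locally free $\sV$ with surjection $f\colon \sV\to h^1(E^\vee)$ and, working locally, lift $f$ through the vector-bundle surjection $p$ to $\tilde f\colon \sV\to E^{-1\vee}$ (possible since $\sV$ is locally free and $p$ is surjective on sections locally). Then
\[
\sV\times_{h^1(E^\vee)}\eta_\phi(A)=\tilde f^{-1}(A'),
\]
and since $p\circ\tilde f=f$ is a submersion onto a linear quotient, $\tilde f^{-1}(A')$ is reduced because $A'$ is reduced and $\tilde f$ is (locally) flat of constant rank onto a subbundle of $E^{-1\vee}$; any two such local liftings differ by a section of the smooth group $E^{0\vee}$, and reducedness is insensitive to that torsor ambiguity, so the reduced structure on $\eta_\phi(A)$ is independent of choices and glues globally.

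Finally, for the integrality step, suppose $\eta_\phi(A)=B_1\cup B_2$ with $B_1,B_2$ reduced cycles in $h^1(E^\vee)$. Pulling back through $p$ gives $A'=B_1'\cup B_2'$ with $B_i'=p^{-1}(B_i)$ reduced $E^{0\vee}$-invariant closed subschemes of $E^{-1\vee}$. Each $B_i'$ descends to a reduced cycle $\tilde B_i\subset h^1/h^0(E^\vee)$ whose preimage under $q$ is $B_i'$, and one checks directly against the definition that $\tilde B_i$ is again a reduced cycle. Then $A=\tilde B_1\cup \tilde B_2$; by integrality of $A$ one of the $\tilde B_i$ equals $A$, whence the corresponding $B_i$ equals $\eta_\phi(A)$. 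Thus $\eta_\phi(A)$ is integral, and defining $\eta_{\phi\ast}[A]\defeq[\eta_\phi(A)]$ gives the required element of $Z\lsta h^1(E^\vee)$.

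The main obstacle I anticipate is bookkeeping: showing that the scheme-theoretic preimage under $\tilde f$ is genuinely reduced, and that the construction is independent of the local two-term presentation of $E^\vee$ and of the local lifts of $f$. These points are routine using that $p$ is a smooth affine bundle quotient by a linear group action, but require care to ensure that the reduced structure assigned to $\eta_\phi(A)$ is intrinsic and patches on overlaps.
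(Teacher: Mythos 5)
Your argument is essentially the paper's own: both proofs hinge on the identity that the preimage in the atlas $E_{-1}\dual$ of the image $\eta_\phi(A)$ coincides with the preimage of $A$ itself, because the fibers of $E_{-1}\dual\to h^1(E\dual)$ over an affine base are exactly the $E_0\dual$-orbits (the paper verifies this by trivializing the torsor over affine $T$), and reducedness/integrality then descend. Your extra verifications (independence of the choice of surjection $\sV\to h^1(E\dual)$ and the decomposition argument for integrality) are correct in substance, though the local lift $\tilde f$ need not be flat as written; the standard fix is to compare two surjections $\sV_1,\sV_2\to\sF$ through the smooth surjective bundle map $\sV_1\oplus\sV_2\to\sV_1$, $(v_1,v_2)\mapsto v_1+\tilde f(v_2)$.
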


\begin{proof}
We let $E=[E_{-1}\to E_0]$ with both $E_i$ locally free; let $\sV=E_{-1}\dual$ and let $\sV[-1]\to E\dual$ be induced
by the identity $\sV\to E_{-1}\dual$. By viewing $\sV$ as the sheaf stack of $\sV$, the arrow $\sV[-1]\to E\dual$
induces a morphism $p: \sV\to h^1/h^0(E\dual)$. 

Let $A\sub h^1/h^0(E\dual)$ be an integral Artin substact.
By definition, 
$$p\upmo(A)\defeq A\times_{h^1/h^0(E\dual)} \sV\sub \sV
$$
is a reduced Zariski closed subset.
Let $p': \sV\to h^1(E\dual)$ be the morphism of stack induced by the same $\sV[-1]\to E\dual$. It fits into 
a commutative square
$$\begin{CD} \sV @= \sV\\
@VV{p}V @VV{p'}V\\
h^1/h^0(E\dual) @>{q}>> h^1(E\dual).
\end{CD}
$$
We claim that 
$$p^{\prime-1}(q(A))\defeq q(A)\times_{h^1(E\dual)} \sV =\ p\upmo(A)\sub \sV.
$$

We prove that $p^{\prime-1}(q(A))\sub p\upmo(A)$; the other direction of inclusion is similar and more direct. 
Let $T$ be any affine scheme and $\rho: T\to U$ a morphism. By definition, an object in $p^{\prime-1}(q(A))(\rho)$
consists of a pair of a $T$-morphism $\xi_1: T\to \sV|_T$ and an $E_0\dual|_T$-equivariant $T$-morphism $\xi_2: P\to E_{-1}\dual|_T$,
where $P$ is a principle $E_0\dual|_T$-bundle over $T$, such that 
\beq\label{eq-prin}
p'\circ\xi_1=q\circ\xi_2/E_0\dual,
\eeq
where $q\circ\xi_2/E_0\dual$ is the descent of $q\circ \xi_2$ to $T\to h^1(E\dual)|_T$ using that $E_0\dual$ acts on
$h^1(E\dual)$ trivially. 

We need to show that $\xi_1$ is an object in $p\upmo(A)(\rho)$. For this, it suffices to show that $p(\xi_1)\cong \xi_2$. 
Following \cite[Sect. 2]{BF}, $p(\xi_1)$ is given by the $E_0\dual|_T$-principle bundle $P'=E_0\dual|_T$ with the 
tautological projection
$\pi': P'\to T$ and a
$\xi_2': P'\to E_{-1}\dual|_T$ via $\xi_2'(v)=d\dual(v)+\xi_1(\pi'(v))$, where $d\dual: E_0\dual\to E_{-1}\dual$ is the dual of
$[E_{-1}\to E_0]$. 

Because $T$ is affine, we can find a section $s: T\to P$ of the bundle $P\to T$. Then \eqref{eq-prin} implies
that 
$$p'\circ\xi_1=(q\circ p)\circ \xi_2\circ s=p'\circ\xi_2\circ s: T\lra  h^1(E\dual)|_T.
$$
Therefore, there is a section $t: T\to E_0\dual|_T$ so that $\xi_2\circ s=\xi_1+d\dual\circ t: T\to \sV$.
Therefore, we can find an isomorphism $P\cong P'$ as principle bundle so that
$\xi_2'\cong \xi_2$. This proves that $p(\xi_1)\cong \xi_2$; namely, $\xi_1$ is an object in $p\upmo(A)$.
This proves the Lemma.
\end{proof}

Representing $[\cC_{U/\cM}]$ as a linear combination of integral cycles in $N_{U/\cM}$, and 
applying the push-forward $\eta_{\phi\ast}$, we obtain 
$$[\mathfrak c_{\phi}]=\eta_{\phi\ast}[\cC_{U/\cM}]\in Z\lsta  h^1(E\dual).
$$

In the remainder of this subsection, we study the dependence of this cycle on the obstruction theory $\phi$.
Suppose we have another truncated perfect relative obstruction theory
\beq\label{ob-2}
\phi_{}': E_{}'\lra L_{U/\cM},
\eeq
and suppose we have an isomorphism 
\beq\label{iso-m}
\psi: h^1(E\dual)\mapright{\cong} h^1(E^{\prime\vee}).
\eeq
We study when the cycles $\psi\lsta[\fc_\phi]=[\fc_{\phi'}]$.

As was proved in \cite{LT}, the cycle $[\fc_\phi]$ \black is determined by the
obstruction theories to deforming closed points in $X$. The uniqueness proof given here
reminiscent to that in \cite{LT}.

For any closed $p\in U$, we denote
\beq\label{TT}
T^i_{p,U/\cM}=H^i((L_{U/\cM})\dual|_p).
\eeq

\begin{defi}\label{def1.7}
We define the intrinsic obstruction space to deforming $p\in U$ be
$T^1_{p,U/\cM}$;
we define the obstruction space (of the obstruction theory $\phi_{}$) to deforming  $p\in U$ be
$\mathrm{Ob}(\phi,p)=H^1(E_{}\dual|_p)$.
\end{defi}

\begin{defi} \label{def-22}Let $\iota: T\to T'$ be a closed subscheme with $T'$ local Artinian. Let $I$ be the ideal sheaf of $T$ in $T'$,
and let $\mm$ be the ideal sheaf of the closed point of $T'$.
We call $\iota$ a small extension if $I\cdot \mm=0$.
Given a small extension $(T, T',I,\mm)$ that fits into a commutative square
\beq\label{lift}
\begin{CD}
  T@>{g}>>U\\
  @VV{\iota}V@VVV\\
T'@>>> \cM
\end{CD}
\eeq
so that the image of $g$ contains a closed point $p\in U$,
finding a morphism $g': T'\to U$ that commutes with the arrows in \eqref{lift} is called
``infinitesimal lifting problem of $U/\cM$ at $p$''.
\end{defi}

Applying the standard obstruction theory and using
\beq\label{formula}
\Ext^i(g^\ast L_{U/\cM},I) = \Ext^i(g^\ast\LL^{\bullet}_{U/\cM},I), \quad i=0,1,
\eeq
we obtain

\begin{lemm}[{\cite[Chap. 3, Thm. 2.1.7]{Illusie}}]
For an infinitesimal lifting problem of $U/\cM$ at $p$ as in \eqref{lift}, 
there is a canonical element
$$\omega(g,T,T')\in \Ext^1(g^\ast L_{U/\cM},I)=T^1_{p, U/\cM}\otimes_{\kk}I
$$
whose vanishing is necessary and sufficient for the lifting problem to be solvable;
in case the lifting problem is solvable, the collection of the solutions form a torsor under
$$\Ext^0(g^\ast L_{U/\cM},I)=\Hom(g^\ast\Omega_{U/\cM},I).$$
\end{lemm}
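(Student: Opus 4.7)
The plan is to reduce the claim to Illusie's Chap.~3, Thm.~2.1.7, by establishing the two $\Ext$-group identifications displayed in the statement. First I would invoke Illusie's theorem: given the square-zero extension $T\hookrightarrow T'$ with ideal $I$ and the morphism $g:T\to U$ over $\cM$, the extension is classified by a class $e\in\Ext^1(L_{T/\cM},I)$, and the obstruction $\omega(g,T,T')$ is produced as the image of $e$ under the pullback
\[
\Ext^1(L_{T/\cM},I)\lra \Ext^1(g^\ast L_{U/\cM},I)
\]
induced by the transitivity morphism $g^\ast L_{U/\cM}\to L_{T/\cM}$. From the distinguished triangle $g^\ast L_{U/\cM}\to L_{T/\cM}\to L_{T/U}\to [1]$ and its $\Hom(-,I)$ long exact sequence, one reads off both that vanishing of $\omega$ is equivalent to $e$ being liftable to $\Ext^1(L_{T/U},I)$ (which in turn classifies $U$-extensions of $T$ by $I$, i.e.\ lifts $g'$) and that, when $\omega=0$, the ambiguity of lifts is a torsor under $\Ext^0(g^\ast L_{U/\cM},I)$.

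Next I would translate these abstract $\Ext$-groups into the concrete forms asserted. The identification $\Ext^0(g^\ast L_{U/\cM},I)=\Hom(g^\ast\Omega_{U/\cM},I)$ is immediate from $h^0(L_{U/\cM})=\Omega_{U/\cM}$ and the fact that $L_{U/\cM}$ has amplitude in $(-\infty,0]$, so $\Ext^0$ depends only on $h^0$. For the identification $\Ext^1(g^\ast L_{U/\cM},I)=T^1_{p,U/\cM}\otimes_\kk I$, the condition $I\cdot\mm=0$ from the small-extension hypothesis forces $I$ to be a $\sO_{T'}/\mm=\kk$-vector space. Since $T$ is local Artinian and $g$ sends its unique closed point to $p$, $g$ factors through the formal completion of $U$ at $p$, so tensoring $g^\ast L_{U/\cM}$ with $I$ is computed on $L_{U/\cM}|_p$. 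A spectral-sequence (or direct Ext-computation) then yields
\[
\Ext^1(g^\ast L_{U/\cM},I)\cong H^1\bl (L_{U/\cM}|_p)\dual\br \otimes_\kk I = T^1_{p,U/\cM}\otimes_\kk I.
\]

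Finally I would apply the isomorphisms \eqref{formula} to replace $L_{U/\cM}$ by its truncation $\LL^{\bullet}_{U/\cM}$ in both $\Ext^0$ and $\Ext^1$; this is legitimate because the kernel $\tau_{\le -2}L_{U/\cM}$ is concentrated in cohomological degrees $\le -2$ and therefore contributes to $\Ext^i$ only for $i\ge 2$. Assembling these three steps delivers the stated lemma verbatim.

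The only genuinely subtle point I anticipate is the base-change identification of $\Ext^1(g^\ast L_{U/\cM},I)$ with $T^1_{p,U/\cM}\otimes_\kk I$: one has to ensure that the $\sO_{U,p}$-module structure on $I$ induced through $g$ is compatible with the residue-field structure, and that taking $\Ext$ locally against a $\kk$-vector-space sheaf commutes with passing to the closed fibre at $p$. Everything else is a formal consequence of Illusie's theorem and the truncation comparison \eqref{formula}.
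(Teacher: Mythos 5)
Your argument is correct and follows exactly the route the paper indicates (the paper offers no proof beyond citing Illusie's theorem together with the truncation identity \eqref{formula}): you invoke the standard obstruction theory for the square-zero extension, identify $\Ext^0$ and $\Ext^1$ of $g^\ast L_{U/\cM}$ against the $\kk(p)$-vector space $I$ with $\Hom(g^\ast\Omega_{U/\cM},I)$ and $T^1_{p,U/\cM}\otimes_\kk I$, and use that the tail $\tau_{\le -2}$ of the cotangent complex only affects $\Ext^{\ge 2}$. The details you supply, including the base-change point you flag, are the standard ones and are sound.
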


\begin{defi}
Let $\phi_{}$ in \eqref{ob-1} be a perfect obstruction theory. For the infinitesimal lifting problem
\eqref{lift}, we call the image
\beq\label{ob-class}
\ob(\phi_{}, g,T,T')\defeq H^1(\phi_{}\dual)(\omega(g, T, T'))\in \Ext^1(g\sta E,I)=\mathrm{Ob}(\phi,p)\otimes_{\kk} I
\eeq
the obstruction class (of $\phi$) to the lifting problem \eqref{lift}.
\end{defi}

\begin{coro}[\cite{BF}]
Let $\phi$ in \eqref{ob-1} be a perfect relative obstruction theory. Then
the lifting problem \eqref{lift} is solvable if and only if $\ob(\phi,g,T,T')=0$.
\end{coro}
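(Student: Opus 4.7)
The plan is to deduce the corollary from the preceding Illusie lemma. By that lemma, the lifting problem in \eqref{lift} is solvable if and only if $\omega(g,T,T') \in \Ext^1(g^\ast L_{U/\cM}, I)$ vanishes. Since $\ob(\phi, g, T, T')$ is by construction the image of $\omega(g, T, T')$ under the pullback
$\phi^\ast\colon \Ext^1(g^\ast L_{U/\cM}, I) \to \Ext^1(g^\ast E, I)$, the direction ``solvable implies $\ob=0$'' is immediate. The content is the converse, for which it suffices to prove that $\phi^\ast$ is injective on these $\Ext^1$ groups.

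Using the identification \eqref{formula}, I would first replace $L_{U/\cM}$ by the truncation $L^{\geq -1}_{U/\cM}$, whose cohomology is concentrated in degrees $-1$ and $0$, and form the mapping cone $C$ of $\phi\colon E \to L^{\geq -1}_{U/\cM}$. The long exact cohomology sequence associated to the triangle $E \to L^{\geq -1}_{U/\cM} \to C \to E[1]$, together with the hypotheses that $h^0(\phi)$ is an isomorphism and $h^{-1}(\phi)$ is surjective, forces $h^{-1}(C) = h^0(C) = 0$. Since $E$ and $L^{\geq -1}_{U/\cM}$ both have amplitude $[-1, 0]$, one also sees $h^1(C) = 0$, so $C$ lies in $D^{\leq -2}(U)$.

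Applying $\mathrm{R}\Hom(g^\ast(-), I)$ to the same triangle yields the exact sequence
$\Ext^1(g^\ast C, I) \to \Ext^1(g^\ast L^{\geq -1}_{U/\cM}, I) \to \Ext^1(g^\ast E, I)$,
so the desired injectivity reduces to the vanishing $\Ext^1(g^\ast C, I) = \Hom_{D(T)}(g^\ast C, I[1]) = 0$. This follows from a standard $t$-structure gap: the derived pullback $g^\ast C$ still lies in $D^{\leq -2}(T)$, whereas $I[1] \in D^{\geq -1}(T)$, so no nonzero morphism between them exists in the derived category. The only delicacy is keeping the truncation and the derived pullback honest through the diagram; the cohomological bookkeeping is otherwise routine, and the argument is essentially the one in Behrend--Fantechi.
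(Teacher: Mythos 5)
Your proposal is correct and follows the same route as the paper: both reduce the statement to Illusie's lemma together with the injectivity of the map $\Ext^1(g^\ast L_{U/\cM},I)\to\Ext^1(g^\ast E,I)$ induced by $\phi$. The paper disposes of that injectivity in one line by citing the injectivity of $H^1(\phi\dual|_p)$ (which, after the identifications $\Ext^1(g^\ast L_{U/\cM},I)=T^1_{p,U/\cM}\otimes_{\kk}I$ and $\Ext^1(g^\ast E,I)=\mathrm{Ob}(\phi,p)\otimes_{\kk}I$, is exactly the needed injectivity), while you supply the underlying mapping-cone and $t$-structure bookkeeping; the content is the same.
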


\begin{proof}
This is true because $H^{1}(\phi_{}\dual|_p)$ is injective.
\end{proof}

We now back to the pair of obstruction theories $\phi$ and $\phi'$ mentioned in \eqref{ob-2}. 

%
%
%
%

\begin{defi}\label{def1.8}
We call $\phi_{}$ and $\phi'$ $\nu$-equivalent
if there is an isomorphism of sheaves
\beq\label{psi}
\psi: H^1(E\dual) \mapright{\cong} H^1(E^{\prime\vee})
\eeq
so that for every closed point $p\in U$, and for any ``infinitesimal lifting problem of $U/\cM$ at $p$" as in (\ref{lift}), 
we have
$$\psi|_p\bl \ob(\phi,g,T,T')\br=\ob(\phi',g,T,T')\in \mathrm{Ob}(\phi',p)\otimes_{\kk} I.
$$ 
\end{defi}

%

%
%
The main result of this section is

\begin{prop}\label{glue-cycle} 
Let \eqref{psi} be a $\nu$-equivalence of
$\phi_{}$ and $\phi'$, and let 
$\eta_\phi: N_{U/\cM} \to h^1(E\dual)$ be the
induced morphisms of stacks (cf. \eqref{eta}). Then for any integral cycle $A\sub N_{U/\cM}$,
$$\psi\lsta\bl \eta_{\phi\ast} [A]\br=\eta_{\phi'\ast}[A]\in Z\lsta h^1(E^{\prime\vee}).
$$
\end{prop}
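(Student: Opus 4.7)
The plan is to prove the stronger statement that $\psi\circ\eta_\phi=\eta_{\phi'}$ as morphisms of sheaf stacks $N_{U/\cM}\to h^1(E^{\prime\vee})$; the cycle identity in the proposition then follows for any $A$ by functoriality of push-forward. Since the sheaf stack $h^1(E^{\prime\vee})$ has trivial automorphism groups, the morphism $\eta_{\phi'}$ factors through the natural quotient $N_{U/\cM}=h^1/h^0(L_{U/\cM}^\vee)\twoheadrightarrow h^1(L_{U/\cM}^\vee)$, and the induced map on sheaf stacks is the one associated to the coherent sheaf homomorphism $H^1(\phi^{\prime\vee}):H^1(L_{U/\cM}^\vee)\to H^1(E^{\prime\vee})$; the same holds for $\eta_\phi$. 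Thus the desired equality is equivalent to the identity $\psi\circ H^1(\phi^\vee)=H^1(\phi^{\prime\vee})$ of coherent sheaf maps on $U$.

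Because $U$ is of finite type over the algebraically closed, hence Jacobson, field $\kk$, equality of two maps $\alpha,\beta:\sF\to\sG$ of coherent sheaves on $U$ can be verified fiberwise at closed points: if the image of $\alpha-\beta$ has zero fiber at every closed $p\in U$, then by Nakayama its stalks vanish at every closed point, and since its support is a closed subset of the Jacobson scheme $U$ with no closed points, the image is zero. It therefore suffices to prove, for each closed $p\in U$,
$$\psi|_p\circ H^1(\phi^\vee|_p)=H^1(\phi^{\prime\vee}|_p)$$
as $\kk$-linear maps $T^1_{p,U/\cM}\to H^1(E^{\prime\vee}|_p)=\mathrm{Ob}(\phi',p)$.

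The $\nu$-equivalence hypothesis of Definition \ref{def1.8} supplies this equality precisely on classes of the form $\omega(g,T,T')\in T^1_{p,U/\cM}\otimes I$. The remaining task is to show that such classes, as the small extension at $p$ varies, span all of $T^1_{p,U/\cM}$. For this, work in an \'etale neighborhood of $p$ on which $U\to\cM$ admits a factorization as a closed immersion $U\hookrightarrow V$ followed by a smooth morphism $V\to\cM$; such a $V$ exists because $U\to\cM$ is representable of finite type and $\cM$ is smooth. Then $L_{U/\cM}$ is represented by $[\sI/\sI^2\to\Omega_{V/\cM}|_U]$ with $\sI$ the ideal of $U$ in $V$, and $T^1_{p,U/\cM}=\coker(T_{V/\cM}|_p\to N_{U/V}|_p)$. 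Given any $v\in T^1_{p,U/\cM}$, lift to $\tilde v\in N_{U/V}|_p$ and further to a tangent vector $\hat v\in T_V|_p$; the associated $\Spec\kk[\epsilon]/(\epsilon^2)$-point $T'\to V$ centered at $p$, composed with $V\to\cM$, defines a lifting problem at $p$ with $T=\Spec\kk$, $I=(\epsilon)$, whose obstruction class is identified with $v\otimes\epsilon$ via Illusie's formula.

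Chaining these reductions yields $\psi\circ\eta_\phi=\eta_{\phi'}$ as morphisms of sheaf stacks, whence the proposition. The principal technical point is the construction in the last paragraph: verifying that the explicit lifting problem has obstruction exactly $v\otimes\epsilon$ requires tracking the connecting map of the distinguished triangle $L_{V/\cM}|_U\to L_{U/\cM}\to(\sI/\sI^2)[1]$ in Illusie's definition \cite[Chap. III, 2.1.7]{Illusie}, whereas the passage from the \'etale chart back to $U$ is harmless since both the closed-point criterion for sheaf-map equality and the $\nu$-equivalence hypothesis descend unchanged under \'etale base change.
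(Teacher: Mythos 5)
Your overall strategy---reduce to closed points and use the $\nu$-equivalence together with the fact that every class in $T^1_{p,U/\cM}$ is realized as an obstruction class of some lifting problem---is the same as the paper's, but two of your reduction steps fail as stated. First, you claim the stronger assertion that $\psi\circ H^1(\phi\dual)=H^1(\phi^{\prime\vee})$ as homomorphisms of coherent sheaves, and you propose to verify this fiberwise at closed points via Nakayama. This is false for non-reduced $U$: take $U=\Spec \kk[t]/(t^2)$ and $\alpha-\beta$ the map $\sO_U\to\sO_U$ given by multiplication by $t$; its fiber at the unique closed point is zero, yet the map is nonzero (note $(\alpha-\beta)|_p=0$ does not give $\image(\alpha-\beta)\otimes\kk(p)=0$, which is what Nakayama would need). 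Since the charts $U$ of a moduli space are typically non-reduced, this step collapses. The proposition, however, only asserts an equality of \emph{cycles}; the paper's proof pulls both sides back to the scheme $\sV=E_{-1}^{\prime\vee}$, observes that both preimages are reduced Zariski closed subsets because $A$ is integral, and is therefore entitled to compare them on closed points only. You should weaken your intermediate claim to exactly that.

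Second, your replacement for Lemma~\ref{any} (the paper quotes \cite[Prop.~4.7]{BF}) is insufficient. You realize a given $v\in T^1_{p,U/\cM}$ by lifting it to a tangent vector $\hat v\in T_V|_p$ and taking the first-order lifting problem $\Spec\kk\sub\Spec\kk[\epsilon]$. But the obstruction classes arising from such first-order problems are exactly the image of the composite $T_V|_p\to N_{U/V}|_p\to T^1_{p,U/\cM}$, and the first arrow can fail to be surjective---indeed it is the zero map for $U=\Spec\kk[t]/(t^2)\sub V=\mathbb{A}^1$, where nevertheless $T^1_{p,U/\cM}\ne 0$. Realizing an arbitrary $v$ requires small extensions $T\sub T'$ with $T$ a higher-order Artinian thickening of $p$, which is precisely the content and the difficulty of \cite[Prop.~4.7]{BF}; you cannot shortcut it with tangent vectors. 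With these two steps repaired (compare reduced cycles pointwise rather than sheaf maps, and cite or reprove the Behrend--Fantechi lemma in full), your argument becomes the paper's.
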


We recall the following known facts. 

\begin{lemm}\label{coro-1}
Suppose $G$ is a perfect complex over $U$ of amplitude $\le 0$.
Let $p\in U$ be a closed point
and denote $V^i=H^i(G\dual|_p)$. 
Then canonically,
$h^1/h^0(G\dual)|_p\cong  [V^1/V^0]$, where $V^0$ acts on $V^1$ trivially.
\end{lemm}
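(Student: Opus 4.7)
Since the statement is local at $p$, I would begin by shrinking $U$ and representing the truncation $\tau^{\le 1}(G\dual)$ by a two-term complex of locally free $\sO_U$-modules $[F^0 \xrightarrow{d} F^1]$ in degrees $0$ and $1$. This is possible because $G$ is perfect of amplitude $\le 0$, so $G\dual$ is perfect of amplitude $\ge 0$ and its truncation $\tau^{\le 1}(G\dual)$ is perfect of amplitude $[0,1]$, hence admits such a resolution locally. By the standard construction of the Behrend--Fantechi Picard/bundle stack, $h^1/h^0(G\dual)$ is then the quotient stack $[F^1/F^0]$, where $F^0$, $F^1$ denote the corresponding abelian group schemes over $U$ and $F^0$ acts on $F^1$ by translation through $d$.

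Since the formation of such a quotient commutes with base change, restricting to $p$ yields $h^1/h^0(G\dual)|_p = [B/A]$, where $A := F^0|_p$ and $B := F^1|_p$ are finite-dimensional $\kappa(p)$-vector spaces and the action is via the linear map $\bar d := d|_p : A \to B$. By construction $V^0 = \ker(\bar d)$ and $V^1 = \mathrm{coker}(\bar d)$.

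I would then analyze $[B/A]$ directly. The kernel of the $A$-action on $B$ is exactly $V^0$, which therefore acts trivially; while $A/V^0 \cong \mathrm{image}(\bar d) \subset B$ acts on $B$ freely by translation, with coarse quotient $V^1$. Thus $[B/A]$ is a $V^0$-banded gerbe over $V^1$. Since the short exact sequence $0 \to V^0 \to A \to \mathrm{image}(\bar d) \to 0$ of $\kappa(p)$-vector spaces splits, any splitting yields a direct sum decomposition of $A$ compatible with the action. This trivializes the gerbe and produces $[B/A] \cong [V^1/V^0]$ with $V^0$ acting trivially on $V^1$.

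The main obstacle I anticipate is the canonicity assertion. Although the choice of presentation $[F^0 \to F^1]$ and the choice of a splitting are used to write down a particular equivalence, the target $[V^1/V^0]$ is intrinsic: $V^0$ and $V^1$ are the cohomology of $G\dual|_p$, and $h^1/h^0$ is functorial under quasi-isomorphism, so different choices yield $2$-isomorphic equivalences. Equivalently, and more conceptually, every bounded complex over a field is formal, so $\tau^{\le 1}(G\dual|_p)$ is quasi-isomorphic in $D(\kappa(p))$ to the zero-differential complex $[V^0 \xrightarrow{0} V^1]$; applying $h^1/h^0$ to that manifestly produces $[V^1/V^0]$ with trivial action, in a manner visibly independent of all choices.
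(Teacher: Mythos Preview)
Your primary line of argument has a genuine gap: the assertion that $\tau^{\le 1}(G\dual)$ is perfect of amplitude $[0,1]$ is false in general, because truncation does not preserve perfect amplitude. Even over a smooth $U$ this fails. For instance, on $U=\AA^4$ take $G=[\sO\to\sO^4\xrightarrow{0}\sO]$ in degrees $[-2,0]$ with the first arrow $1\mapsto(x_1,x_2,x_3,x_4)$. Then $G\dual=[\sO\xrightarrow{0}\sO^4\to\sO]$ in degrees $[0,2]$ and $\tau^{\ge 2}(G\dual)\simeq \kk(0)[-2]$; from the exact triangle $\tau^{\le 1}(G\dual)\to G\dual\to\tau^{\ge 2}(G\dual)$ one finds
$H^{-1}\bigl(\tau^{\le 1}(G\dual)\otimes^{\bL}\kk(0)\bigr)\cong\Tor_4^{\sO_U}(\kk(0),\kk(0))\cong\kk\ne 0$.
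So $\tau^{\le 1}(G\dual)$ does not have Tor-amplitude in $[0,1]$ and admits no two-term locally free model; the vector bundle $F^1$ you need does not exist.

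Your final paragraph, however, gives a correct and self-contained proof: formality of bounded complexes over the field $\kk(p)$ yields $G\dual|_p\simeq\bigoplus_i V^i[-i]$, and combined with base change $h^1/h^0(G\dual)|_p\cong h^1/h^0(G\dual|_p)$ this produces $[V^1/V^0]$ with trivial action directly. The paper runs essentially the same idea but over $U$ rather than over $\kk(p)$: it writes $G=[\cdots\to G_{-1}\xrightarrow{d_0}G_0]$ normalized so that $d_0|_p=0$ (always achievable near $p$) and uses the abelian-cone presentation $h^1/h^0(G\dual)=[C/G_0\dual]$ with $C=\Spec\text{Sym}(\coker d_{-1})$. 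Here $C$ is an abelian cone rather than a vector bundle, so no perfectness of the truncation is invoked; the normalization then forces $C|_p=V^1$, $G_0\dual|_p=V^0$, and the action at $p$ to be trivial. Your first argument is repaired by exactly this device: allow $F^1$ to be the abelian cone of a coherent sheaf; once restricted to $p$ it becomes a vector space and your splitting analysis goes through unchanged.
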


\begin{proof}
Let $G=[\cdots\mapright{d_{-1}} G_{-1}\mapright{d_0} G_0\to 0]$, where $G_i$ are locally free.
Without loss of generality, we assume $d_0|_p: G_{-1}|_p\to G_0|_p$ is trivial.
Let $G'_{-1}=\coker(d_{-1})$. Then $G^{\geq -1}=[G'_{-1}\to G_0]$. 
%
We let $C=\Spec\text{Sym}(G'_{-1})$. Then the arrow $G'_{-1}\to G_0$ defines an action
$G_0\dual\times_Y C\to C$, where we view $G_0\dual$ as the total space of the bundle $G_0\dual$. 
By definition (cf. \cite{BF}), $h^1/h^0(G\dual)=[C/G_0\dual]$, where $[\cdot]$ means the quotient stack.

By the base change property of $\coker(d_{-1})$, we have $\coker(d_{-1}|_p)=G_{-1}'|_p$. 
Since $d_0|_p=0$,
we have  $C\times_U p=V^1$. 
The lemma then follows from the definition of $h^1/h^0$ construction.
\end{proof}

%
%
%
%

We quote the following

\begin{lemm}[{\cite[Prop 4.7]{BF}}] \label{any}
Let the situation be as in Definition \ref{def-22}, and let $p$ be a closed point of $U$.
Given any vector $v\in T^1_{p,U/\cM}$, there exists an infinitesimal lifting problem of $U/\cM$ at $p$
as in Definition \ref{def-22} 
such that for some $w\ne 0\in I$, the obstruction class 
$$w(g,T,T')=v\otimes w \in \Ext^1(g^\ast L_{U/\cM},I)\cong
T^1_{p,U/\cM}\otimes_{\kk(p)} I.
$$
\end{lemm}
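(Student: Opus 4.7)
The plan is to realize $v$ explicitly as the obstruction of a small extension constructed by thickening the point $p$ inside an ambient smooth $\cM$-scheme, along a direction specified by a chosen lift of $v$ to the conormal sheaf.

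First, I would work \'etale-locally around $p$: using smoothness of $\cM$, reduce to the affine case $\cM=\Spec B$, $U=\Spec A$, and choose a closed immersion $\iota:U\hookrightarrow V=\Spec R$ over $\cM$ with $V\to\cM$ smooth (take $R=B[x_1,\dots,x_N]$ for $N$ large). Let $J\subset R$ be the ideal of $U$ and $\mm_V\subset R$ the maximal ideal of $\iota(p)$. The naive presentation $\tau^{\geq-1}L_{U/\cM}\simeq[J/J^2\to\Omega_{V/\cM}|_U]$ gives
\[
T^1_{p,U/\cM}=\coker\bigl(T_p(V/\cM)\xrightarrow{d^\vee}(J/J^2)^\vee|_p\bigr),
\]
so I lift $v$ to $\tilde v\in(J/J^2)^\vee|_p=\Hom_{k(p)}(J/\mm_V J,k(p))$. (The case $v=0$ is handled by any trivially unobstructed small extension with $I\neq 0$, so assume $\tilde v\ne 0$.)

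Next I construct the small extension. Put
\[
\tilde J:=\ker\bigl(J\twoheadrightarrow J/\mm_V J\xrightarrow{\tilde v}k(p)\bigr)\subset J,
\]
so $J/\tilde J\cong k(p)\cdot w$, where $w$ is the image of any $e\in J$ with $\tilde v(e)=1$. After localizing $R$ at $\mm_V$, choose $n\ge1$ large enough that $J\cap\mm_V^{n+1}\subset\tilde J$ (guaranteed by Artin--Rees) and $e\notin\mm_V^{n+1}$, and define
\[
T:=\Spec R/(J+\mm_V^{n+1}),\qquad T':=\Spec R/(\tilde J+\mm_V^{n+1}),
\]
both Artinian local at $p$. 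Take $g:T\hookrightarrow U$ to be the natural closed immersion and $T'\to\cM$ to be $T'\hookrightarrow V\to\cM$. The defining ideal of $T\subset T'$ is canonically
\[
I=(J+\mm_V^{n+1})/(\tilde J+\mm_V^{n+1})\cong J/\tilde J=k(p)\cdot w,
\]
and $I\cdot\mm_{T'}=0$ because $\mm_V\cdot J\subset\tilde J$ by construction of $\tilde J$. So $T\subset T'$ is a small extension with $w\ne 0\in I$.

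To identify the obstruction, apply Illusie's formula. Take the canonical closed immersion $\tilde g:T'\hookrightarrow V$ as the smooth lift of $g$ across $T\hookrightarrow T'$. Since $g:T\hookrightarrow U$ factors through $\iota$, the map $\tilde g^\ast$ carries $J$ into $I$ and kills $J^2$, hence descends to $\tilde g^\ast:g^\ast(J/J^2)\to I$. For $f\in J$, $\tilde g^\ast(f)=f\bmod(\tilde J+\mm_V^{n+1})=\tilde v(f)\cdot w$, so this map is precisely $\tilde v\otimes w$. Passing to the cokernel of $\Hom(\Omega_{V/\cM}|_p,I)\to\Hom((J/J^2)|_p,I)$, the obstruction class is
\[
\omega(g,T,T')=[\tilde v]\otimes w=v\otimes w\in T^1_{p,U/\cM}\otimes_{k(p)}I,
\]
as required. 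The main technical obstacle is the bookkeeping around $n$: one must simultaneously arrange that $\mm_V^{n+1}$ is small enough not to kill the distinguished generator $w$ (so $I$ remains one-dimensional as claimed) and large enough that $J\cap\mm_V^{n+1}\subset\tilde J$ (so the canonical isomorphism $I\cong J/\tilde J$ holds); this is resolved by Artin--Rees. The remaining verification, that the Illusie canonical obstruction matches the explicit $\tilde g^\ast$ computed from a smooth lift, is standard deformation theory via formula~\eqref{formula}.
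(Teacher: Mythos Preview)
The paper does not give its own proof of this lemma; it is quoted directly from \cite[Prop.~4.7]{BF} and used as a black box in the proof of Proposition~\ref{glue-cycle}. So there is nothing in the paper to compare your argument against.

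That said, your construction is correct and is essentially the standard one. Embedding $U$ into a smooth $\cM$-scheme $V$, lifting $v$ to a functional $\tilde v$ on the conormal fiber $(J/\mm_V J)^\vee$, and letting $\tilde J=\ker\tilde v$ carve out the thickening $T'$ is exactly how one realizes a prescribed element of $T^1_{p,U/\cM}$ as an honest obstruction class. The two points you single out---that Artin--Rees gives $J\cap\mm_V^{n+1}\subset\mm_V J\subset\tilde J$ for $n\gg 0$ so that $I\cong J/\tilde J$ stays one-dimensional, and that the Illusie obstruction of the square is computed by the restriction map $g^\ast(J/J^2)\to I$ coming from the chosen smooth lift $T'\hookrightarrow V$---are the only places requiring care, and both are handled correctly. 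One small remark: you should also note that the case $v=0$ is indeed trivial (take $T=\Spec k(p)$, $T'=\Spec k(p)[\epsilon]$, $I=k(p)\epsilon$, with the obvious maps; the obstruction vanishes), which you mention parenthetically but is worth making explicit since the statement requires $w\ne 0$ regardless.
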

    
%
%
%

\begin{proof}[Proof of Proposition \ref{glue-cycle}]
Let $E'=[E_{-1}'\to E_0']$ with both $E_i'$ locally free. Let $\sV=E_{-1}\pdual$.
To prove the Proposition, by definition, we need to show that for the surjective morphism $\sV\to h^1(E\pdual)$,
\beq\label{sub-A}
\psi(\eta_{\phi}(A))\times_{h^1(E\pdual)} \sV=
\eta_{\phi'}(A)\times_{h^1(E\pdual)}\sV
\eeq
as subsets in $\sV$. Since $\sV$ is a scheme and $A$ is integral, both sides of \eqref{sub-A} are reduced and
Zariski closed. Thus to show \eqref{sub-A}, it suffices to check that for any
closed $p\in U$, we have
$$
\psi(\eta_{\phi}(A))\times_{h^1(E\pdual)} (\sV|_p) =
\eta_{\phi'}(A)\times_{h^1(E\pdual)}(\sV|_p).
$$
But this follows from that there is an isomorphism $\tau_p$ making the following square commutative:
\beq\label{AA}
\begin{CD}
 N_{U/\cM}|_p\,\defeq\,h^1/h^0(L\dual_{U/\cM})|_p @>{\eta_\phi|_p}>> h^1(E\dual)|_p\\
 @VV{\tau_p}V  @VV{\psi|_p}V\\
 N_{U/\cM}|_p\,\defeq\,h^1/h^0(L\dual_{U/\cM})|_p @>{\eta_{\phi'}|_p}>> h^1(E^{\prime\vee})|_p.\\
\end{CD}
\eeq
Applying Lemma to $G=L_{U/M}$, we obtain canonical isomorphisms
$$h^1/h^0((L_{U/\cM})\dual)|_p\cong [H^1(L_{U/\cM}|_p)\dual/H^0(L_{U/\cM}|_p)\dual]=[T^1_{p,U/\cM}/T^0_{p,U/\cM}];
$$
applying the same Lemma to $G=E$, we obtain the identity
$h^1(E\dual)|_p = \text{Ob}(\phi,p)$.
Similarly we obtain isomorphisms with $\phi$ and $E$ replaced by $\phi'$ and $E'$.
Therefore, the existence of $\tau_p$ making \eqref{AA} commutative if the
square
$$\begin{CD}
T^1_{p,U/\cM}@>{H^1(\phi_{}\dual|_p)}>> \text{Ob}(\phi,p)\\
@| @VV{\psi|_p}V\\
T^1_{p,U/\cM}@>{H^1(\phi^{\prime\vee}|_p)}>> \text{Ob}(\phi',p)\\
\end{CD}
$$ 
is commutative.

We prove this commutativity. Given any $v\in T^1_{p,U/\cM}$, by Lemma \ref{any} there exists an infinitesimal
lifting problem $(g,T, T')$ for $U/\cM$ at $p$
as in \eqref{lift} and a $0\neq w\in I$ such that 
$$v\otimes w=\omega(g,T,T')\in T^1_{p,U/\cM}\otimes_{\kk(p)} I.
$$
Since $\phi_{}$ and $\phi'_{}$ are numerically equivalent, we have
$$H^1(\phi_{}\dual|_p)(v)\otimes w=H^1(\phi_{}\dual|_p)(v\otimes w)=\qquad\qquad\qquad\qquad
\qquad\qquad
$$
$$\qquad\qquad=
H^1(\psi\dual|_p)\bl H^1(\phi^{\prime\vee}|_p)(v\otimes w)\br=H^1(\psi\dual|_p)\bl H^1(\phi^{\prime\vee}|_p)(v)\br\otimes w.$$
As $w\neq 0$ and $I$ is a $\kk(p)$ vector space, we have 
$$H^1(\phi_{}\dual|_p)(v)=H^1(\psi\dual|_p)\bl H^1(\phi^{\prime\vee}|_p)(v)\br.
$$
This proves the Proposition.
\end{proof}

\section{Semi perfect obstruction theory}
\def\abg{{\alpha\beta\gamma}}

Let $\cM$ be as before and let $X$ be a Deligne-Mumford stack of locally finite type with a morphism $X\to \cM$.
Given two schemes $U\lalp$ and $U\lbe$ with \'etale $U\lalp, U\lbe\to X$, we 
denote $U\lalpbe=U\lalp\times_X U\lbe$, and for any derived object $F\in D(U\lalp)$ we denote by
$F|_{U\lalpbe}$ the pull back of $F$ under the projection $U\lalpbe\to U\lalp$.

\begin{defi}\label{g-semi}
A semi-perfect relative obstruction theory of $X\to \cM$ consists of
an \'etale covering $\{U\lalp\}_{\alpha\in\Lam}$ of $X$ by affine schemes, and
truncated perfect relative obstruction theories
$$\phi_{\alpha}: E\lalp \lra L_{U\lalp/\cM},\ \alpha\in\Lam
$$
such that   \\
{1}. for each pair $\alpha, \beta\in\Lam$ there is an isomorphism
\beq\label{descent}
\psi\lalpbe:H^1(E\lalp\dual)|_{U\lalpbe}\mapright{} H^1(E\lbe\dual)|_{U\lalpbe}
\eeq
so that the collection $(H^1(E\lalp\dual), \psi\lalpbe)$ forms a descent data of sheaves.\\
{2}.  for any pair $\alpha,\beta\in\Lam$, the obstruction theories $\phi\lalp|_{U\lalpbe}$ and
$\phi\lbe|_{U\lalpbe}$ are $\nu$-equivalent via $\psi\lalpbe$.
\end{defi}

Obviously, a perfect obstruction theory 
is a semi-perfect obstruction theory.
\vsp

We fix a
semi-perfect obstruction theory $\phi=\{\phi\lalp,U\lalp, E\lalp,\psi\lalpbe\}_\Lam$.
We denote by $\Ob_{\phi}$ the resulting descent sheaf on $X$ from (1) of Definition \ref{g-semi};
we call it the obstruction sheaf of the semi-perfect obstruction theory.

Let $\sF$ be a coherent sheaf of $\sO_X$-modules, viewed as a sheaf stack.

\begin{defi}
A reduced cycle $A$ of $\sF$ is a substack $A\sub\sF$ so that for
any \'etale open $U\to X$, $U\times_{\sF}A\sub \sF|_U$ is a reduced cycle in the sense of
Definition \ref{integral}; given three reduced cycles, $A_1$, $A_2$ and $A_3$, we say
$A_3=A_1\cup A_2$ if for any \'etale open $U\to X$, $U\times_{\sF} A_3=U\times_{\sF} A_1\cup U\times_{\sF} A_2$
as cycles in $\sF|_U$, as defined in Definition \ref{integral};
we call $A$ integral if it is not the union of two distinct non-trivial reduced cycles.
We define the cycle group $Z\lsta \sF$ be (rational) linear combinations of integral cycles of $\sF$. 
\end{defi}

Since $\sF$ is a sheaf over $X$, by descent, $A\sub \sF$ a reduced cycle if it is given by an
\'etale covering $U\lalp\to X$ and
reduced cycles $A\lalp\sub \sF|_{U\lalp}$
such that over each $U\lalpbe$:
\beq\label{re0} A\lalp\times_{U\lalp}U\lalpbe = A\lbe\times_{U\lbe}U\lalpbe \sub \sF|_{U\lalpbe}.
\eeq

\vsp
We continue to work with the semi-perfect obstruction theory $\phi$ as in Definition \ref{g-semi}.
We construct a group homomorphism
\beq\label{jmath}
\eta\lsta: Z\lsta N_{X/\cM} \lra Z\lsta \Ob_\phi
\eeq
by patching the collection 
\beq\label{j-alp}
\eta_{\phi\lalp\ast}:  Z\lsta N_{U\lalp/\cM} 
\lra  Z\lsta \Ob_{\phi\lalp}.
\eeq


\begin{lemm} Given an integral Artin substack $[A]\sub Z\lsta N_{X/\cM}$,
the collection 
$$[A\lalp]\defeq \eta_{\phi\lalp\ast}[A\times_X U\lalp]\in Z\lsta \Ob_\phi|_{U\lalp}
$$ 
satisfies the descent condition \eqref{re0} to form an integral
cycle in $Z\lsta \Ob_{\phi}$. 
\end{lemm}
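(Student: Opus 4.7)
The plan is to verify the descent condition \eqref{re0} on each overlap $U\lalpbe$ by appealing to Proposition \ref{glue-cycle}, and then to note that integrality is a local property already checked chart-by-chart. First, because each étale morphism $U\lalp\to X$ is flat, pulling the integral cycle $[A]\sub N_{X/\cM}$ back to $U\lalp$ gives an integral cycle $[A]\times_X U\lalp\sub N_{U\lalp/\cM}$ (using the identification $N_{U\lalp/\cM}=N_{X/\cM}|_{U\lalp}$ coming from étale base change of the cotangent complex). Applying $\eta_{\phi\lalp\ast}$ produces an integral cycle $[A\lalp]\in Z\lsta\Ob_{\phi\lalp}$, by the push-forward lemma established earlier in the paper (which showed the image $\eta_\phi(A)$ of an integral cycle is integral).

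Next I would check the descent condition. Fix $\alpha,\beta\in\Lam$ and restrict to the étale open $U\lalpbe$. Étale base change yields canonical identifications
$$N_{U\lalp/\cM}|_{U\lalpbe}\,=\,N_{U\lalpbe/\cM}\,=\,N_{U\lbe/\cM}|_{U\lalpbe},$$
under which $[A\times_X U\lalp]\times_{U\lalp} U\lalpbe$ and $[A\times_X U\lbe]\times_{U\lbe} U\lalpbe$ both coincide with $[A\times_X U\lalpbe]$. On $U\lalpbe$ we have two truncated perfect relative obstruction theories $\phi\lalp|_{U\lalpbe}$ and $\phi\lbe|_{U\lalpbe}$, which by clause (2) of Definition \ref{g-semi} are $\nu$-equivalent via $\psi\lalpbe$. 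Proposition \ref{glue-cycle} then gives
$$\psi\lalpbe\lsta\bl\eta_{\phi\lalp|_{U\lalpbe}\ast}[A\times_X U\lalpbe]\br=\eta_{\phi\lbe|_{U\lalpbe}\ast}[A\times_X U\lalpbe],$$
which, after unwinding, is exactly the equality $[A\lalp]|_{U\lalpbe}=[A\lbe]|_{U\lalpbe}$ in $\Ob_\phi|_{U\lalpbe}$; here I use that $\psi\lalpbe$ is precisely the isomorphism used to assemble the descent sheaf $\Ob_\phi$.

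For integrality of the glued object, since integrality is tested étale-locally via the Definition of reduced/integral cycle in the sheaf-stack setting, and each $[A\lalp]$ is an integral cycle in $\Ob_\phi|_{U\lalp}$ by the push-forward lemma, the descent data $\{[A\lalp]\}$ defines an integral cycle in $Z\lsta\Ob_\phi$. The main potential obstacle is the compatibility of the two different étale restrictions — that the pullback of a cone (or of its associated cycle) along $U\lalp\to X$ and further along $U\lalpbe\to U\lalp$ coincides with pullback directly from $X$ to $U\lalpbe$ — but this is just the standard compatibility of the intrinsic normal cone with étale base change, combined with the functoriality of $h^1/h^0$ and of $\eta_\phi$. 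Once this is in place, Proposition \ref{glue-cycle} does the real work of comparing the two pushforwards on overlaps, and the claim follows.
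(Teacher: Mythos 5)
Your argument is exactly the one the paper intends: the paper's own proof is the single sentence ``This follows from the definition of semi-perfect obstruction theories and Proposition \ref{glue-cycle},'' and your write-up simply spells out the étale base-change identifications on overlaps and the application of the $\nu$-equivalence clause of Definition \ref{g-semi} together with Proposition \ref{glue-cycle}. Correct, and same approach.
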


\begin{proof}
This follows from the definition of semi-perfect obstruction theories and Proposition \ref{glue-cycle}.
\end{proof}

We denote the resulting cycle by $\eta_{\phi\ast}[A]$. We denote by
$$\eta_{\phi\ast}: Z\lsta N_{X/\cM}\lra Z\lsta \Ob_{\phi}
$$
the homomorphism by linear extension.
Applying this to the cycle $[\cC_{X/\cM}]\in Z\lsta N_{X/\cM}$, we define
\beq\label{vir-cycle}
[\fc_{X/\cM}]=\eta_{\phi\ast}[\cC_{X/\cM}]\in Z\lsta \Ob_\phi.
\eeq
\vsp

Let $s$ be the zero section of $\Ob_\phi$. To define the virtual cycle of $X$, we need to construct
a Gysin map $s^!: Z\lsta \Ob_\phi\lra  A\lsta X$.
The Gysin map for a bundle stack is constructed in \cite{Kre}.
The construction given here was first introduced in the work of the second named author in \cite{Li2}; it is reiterated
in \cite{KL}. We now sketch its construction.

Assume $X$ is proper and of finite type. Let $\sF$ be a coherent sheaf of $\sO_X$ modules, considered as a
sheaf stack.

Given a non-trivial integral cycle $A\sub \sF$, where $\sF$ is a sheaf stack of a coherent sheaf $\sF$ of $\sO_X$-modules,
we pick an affine scheme $U$ and an \'etale $\rho: U\to X$ so that $A\times_X U\ne \emptyset$. Since $U$ is affine, we can find
a vector bundle $\sV_U$ on $U$ and a surjective $\sV_U\to \sF|_U$. By definition, 
$$A|_U \times_{\sF|_U}\sV_U\sub \sV_U
$$
is a reduced Zariski closed subset. We let $B\sub U$ be the image of $A|_U \times_{\sF|_U}\sV_U$
under the projection $\sV_U\to U$. We let $Y\sub X$ be the closure of the image $\rho(B)$. 
Because $A$ is integral, $Y$ is integral; because $X$ is proper, $Y$ is proper. 

Because $X$ is a Deligne-Mumford stack, we can find a projective variety $S$ and a generic finite morphism
$f: S\to Y$. Then we can find a locally free sheaf $\sV$ on $S$ and a surjective sheaf homomorphism
$\sV\to f\sta \sF$. Pick an open $S_0\sub S$ so that $f|_{S_0}: S_0\to Y$ is \'etale. Let $D_0=\sV|_{S_0}\times_{\sF}A\sub \sV|_{S_0}$, 
and let $D_A\sub \sV$ be the closure of $D_0$. We call 
\beq\label{prop-rep}
(f:S\to Y, \sV\to f\sta\sF, D_A\sub \sV)
\eeq
a proper-representative of the integral cycle $A$. 
Let $e$ be the degree of $f: S\to Y$.
We define
$$s^![A] =e\upmo f\lsta \bl 0_{\sV}^![D_A]\br\in A\lsta X,
$$
where $0_{\sV}^!: Z\lsta \sV\to A\lsta S$ is the Gysin map of the zero section of $\sV$.

\begin{prop}
The stated procedure defines a Gysin map
\beq\label{Gysin}
s^!: Z\lsta \sF\lra A\lsta X
\eeq
by linear extension.
\end{prop}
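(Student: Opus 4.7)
The plan is to prove $s^![A]$ is well-defined on integral cycles $A\sub\sF$ and then extend linearly. Independence must be checked against the \'etale chart $(U,\rho)$ and the surjection $\sV_U\to\sF|_U$; the projective variety $S$ with generically finite $f\mh S\to Y$; and the locally free sheaf $\sV$ with the surjection $\sV\to f\sta\sF$.

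\textbf{Step 1 (Intrinsicality of $Y$).} For fixed $(U,\rho)$, two choices $\sV_U,\sV_U'$ are both dominated by $\sV_U\oplus\sV_U'$, and the image under the projection to $U$ of $(\sV_U\oplus\sV_U')\times_{\sF|_U} A|_U$ equals the set-theoretic support of $A\times_\sF\sO_U$; hence $B\sub U$ is intrinsic. For two charts $(U,\rho)$ and $(U',\rho')$, passing to the common \'etale cover $U\times_X U'$ shows the closures in $X$ agree, so $Y\sub X$ depends only on $A$.

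\textbf{Step 2 (Independence of $\sV$ for fixed $(S,f)$).} Two choices $\sV_1,\sV_2\to f\sta\sF$ are both dominated by $\sV_1\oplus\sV_2\to f\sta\sF$, so it suffices to treat a surjection $\pi\mh \sV\to\sV'$ of vector bundles on $S$ compatible with the maps to $f\sta\sF$, with locally free kernel $K$. Then $D_A=\pi\upmo(D_A')$, and by the standard functoriality of zero-section Gysin pullback along an exact sequence $0\to K\to\sV\to\sV'\to 0$ of vector bundles, $0_\sV^![D_A]=0_{\sV'}^![D_A']$ in $A\lsta S$.

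\textbf{Step 3 (Independence of $(S,f)$).} Let $(f\mh S\to Y,\sV,D_A)$ of degree $e$ and $(f'\mh S'\to Y,\sV',D_A')$ of degree $e'$ be two proper-representatives. Pick a projective variety $S''$ with a generically finite $h\mh S''\to Y$ factoring as $h=fg=f'g'$, where $g,g'$ are generically finite of degrees $d,d'$ with $ed=e'd'=\deg h$; such $S''$ is obtained by normalizing (or resolving) an irreducible component of $(S\times_Y S')\lred$ that dominates $Y$. By the standard compatibility of zero-section Gysin pullback with proper pushforward and with pullback of integral cycles along generically finite morphisms,
$$
g\lsta 0_{g\sta\sV}^![g\upmo D_A]=0_\sV^! g\lsta[g\upmo D_A]=d\cdot 0_\sV^![D_A]\in A\lsta S,
$$
so $(\deg h)\upmo h\lsta 0_{g\sta\sV}^![g\upmo D_A]=e\upmo f\lsta 0_\sV^![D_A]$ in $A\lsta X$, and the analogous identity holds with $(f',\sV',D_A',g',d')$ on the $S'$-side. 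This reduces the comparison to an equality of cycles on $S''$, which is supplied by Step 2 applied to $S''$ with the common dominating bundle $g\sta\sV\oplus g^{\prime\ast}\sV'\to h\sta\sF$.

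\textbf{Step 4.} Linear extension over integral cycles produces the Gysin homomorphism \eqref{Gysin}.

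The principal obstacle is Step 3: producing a projective variety $S''\to Y$ over which both $(S,\sV)$ and $(S',\sV')$ can be compared requires care with an irreducible component of a possibly non-reduced fiber product, its normalization (or resolution) to a variety, and the verification that $g\upmo D_A\sub g\sta\sV$ really agrees with the closed subset obtained by running the construction of ``$D$" from $A$ directly on $(S'',g\sta\sV)$---i.e.\ that the closure of $(g\sta\sV)|_{S''_0}\times_\sF A$ commutes with the generically finite base change $g$ over the common \'etale locus of $Y$.
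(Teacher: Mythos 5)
Your proposal is correct and follows essentially the same strategy as the paper's (outlined) proof: dominate any two proper-representatives by a third, generically finite over both, and conclude via the compatibility of the zero-section Gysin map with proper pushforward together with the degree relations of the form $\rho_*[\bar D_A]=\deg(g)[D_A]$. Your Steps 1--2 make explicit some points the paper delegates to \cite{Li2} and \cite[Sect.~3]{KL}, and you correctly isolate the key verification (that forming the closure $D_A$ commutes with generically finite base change over the common \'etale locus), which is exactly the step the paper compresses into ``by the construction of the cycles.''
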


\begin{proof}
We need to show that the map $s^!([A])$ of an integral $A\in Z\lsta \sF$ is independent of the choice of the proper-representatives
of $A$. This is essentially proved in \cite{Li2} and \cite[Sect. 3]{KL}. We outline the main idea here. 
Let $(f',S', \sV', D_A')$ be another proper representative of $A$. Then we can find a third proper representative 
$(\bar f, \bar S, \bar \sV, \bar D_A)$ that fits into the
commutative squares
\beq\label{compare}
\begin{CD}\bar S @>{g'}>> S'\\
@VV{g}V @VV{f'}V\\
S @>{f}>> Y
\end{CD}
\qquad and\qquad 
\begin{CD}
\bar \sV @>{\rho'}>>g\psta \sV'\\
@VV{\rho}V @VV{}V\\
g\sta \sV @>{}>> g\sta f\sta \sF=g\psta f\psta\sF=\bar f\sta \sF
\end{CD}
\eeq
Since $\bar f$ is generically finite, both $g$ and $g'$ are generically finite.
Then by the construction of the cycles $D_A$, $D'_A$ and $\bar D_A\sub \bar\sV$, we have
$\rho\lsta [\bar D_A]=d(g) [D_A]$ and $\rho'\lsta [\bar D_A]=d(g')[D_A']$. 
Therefore, for $s'$ and $\bar s$ the zero sections of $\sV'$ and
$\bar\sV'$, we have 
$$(\deg g)f\lsta s^{!}[D_A]=g\lsta \bar s^![\bar D_A]=(\deg g')f'\lsta s^{\prime !}[D_A'].
$$
Pushing forward to $A\lsta X$, we prove that \eqref{Gysin} is well-defined.
\end{proof}

We prove that the Gysin map preserves the rational equivalence.
Like the usual rational equivalence,
an integral rational equivalence is a pair $(A,h)$ of an integral cycle $A$ of $\sF$ and a non-trivial
rational function on $A$, which we define now.

\begin{defi}
We define a proper representative of a rational function on an integral $A\in Z\lsta \sF$ be a
proper representative $(f,\sV, D_A)$ of $A$ as in \eqref{prop-rep} and a rational function $h_f\in \kk(D_A)$
so that for $D_A^{\mathrm{nor}}$ the normalization of $D_A$, $\ti h_f$ the extension of $h_f$ to $
D_A^{\mathrm{nor}}$, and $\iota: D_A^{\mathrm{nor}}\to \sF$ 
the tautological morphism, we have that for any closed $p\in D_A^{\mathrm{nor}}$, the restriction
$\ti h_f|_{\iota\upmo(\iota(p))}$ is either nowhere defined or takes a single value.

Let $(f', S', \sV', D_A')$ and $h'_{f'}\in \kk(D_a')$ be another proper representative of a rational function
on $A$. We say $h_f\sim h_{f'}'$ if for a third proper representative $(\bar f, \bar S,\bar\sV, \bar D_A)$
of $A$ fitting into the commutative
squares \eqref{compare} so that 
$$(g'|_{\bar D_A})\sta(h'_{f'})=(g|_{\bar D_A})\sta(h_f)\in\kk(\bar D_A).
$$
We define a rational function on $A$ be an equivalence class of proper representatives of rational functions of $A$.
\end{defi}

We define an integral rational equivalence be a pair $(A,h)$ of an integral cycle $A$ of $\sF$ and a non-trivial 
rational function $h$ on $A$.
We define $W\lsta \sF$ be the (rational) linear combinations of integral
rational equivalences of $\sF$. 
We now define the boundary homomorphism
\beq\label{Par}
\partial: W\lsta \sF\to Z\lsta \sF.
\eeq

Let $(A,h)$ be a rational equivalence with $A$ integral and with a proper presentation
$(f,S, \sV, D_A)$ as in \eqref{prop-rep} and $h_f\in \kk(D_A)\sta$. We now construct $\partial(A,h)$. 
We first express 
\beq\label{par-}
\partial(D_A, h_f)=\sum_{A\in I} n_a [D_a],
\eeq
where $D_a\sub \sV$ are integral.  Let $\zeta_S: \sV\to f\sta \sF$ 
be the tautological map and let $\zeta_X: f\sta \sF\to \sF$
be the projection. By our definition of $h_f$, for each $D_a$, the $B_{a,S}=\zeta_S(D_a)\sub f\sta\sF$ has
the property that $D_a=\zeta_S\upmo(B_{a,S})$. Hence following the proof of Proposition \ref{glue-cycle}, one checks that 
$B_{a,S}$ is an integral cycle. 
We let $B_a=\zeta_X(B_{a,S})\sub \sF$; it is an integral cycle in $\sF$. 
Let $e=\deg f$; after defining $e_a$, which should be the degree of 
$\zeta_X|_{B_{a,S}}: B_{a,S}\to B_a$, we define
\beq\label{bound}
\partial(A,h)=e\upmo \sum_{a\in I} n_ae_a [B_a].
\eeq

{We define the degree $e_a$ using \'etale representative of $(A,h)$. 
We let $\pi: \sF\to X$ be the projection. Since $B_{a}\sub \sF$ is an integral cycle,
$\pi(B_a)\sub Y\sub X$ is an integral substack. We pick an affine $S'$ and an 
\'etale $f': S'\to Y$ so that $S'\times_Y \pi(B_a)\to \pi(B_a)$ is dominant. We pick a locally free sheaf
$\sV'$ on $S'$ and a surjective $\sV'\to f^{\prime\ast} \sF$. By shinking $S'$ if necessary, we can assume
$\rank \sV=\rank\ \sV'$. We let $D_A'=A\times_{\sF}\sV'\sub \sV'$.

To compare $D_A$ and $D_A'$, we form $T=S\times_Y S'$; we let $p: T\to S$ and $p': T\to S'$ be
the projections. We then pick an affine opens $U\sub T$ so that if we let $\varphi: \sV\to S$ be the projection,
$U\cap (\varphi(D_a)\times_Y S')$ is dense in $\varphi(D_a)\times_Y S'$; we pick a dense affine open $U_0\sub U$ 
so that $p'|_{U_0}: U_0\to S'$ is \'etale.
Since $U$ is affine, we can find an isomorphism
$p\sta\sV|_U\cong p\psta\sV'|_U$ that commutes with the projection $p\sta \sV\to p\sta f\sta\sF$ and
$p\psta\sV'\to p\psta f\psta \sF$:
$$ \begin{CD}
U_0\sub U\sub T @>{p'}>> S'\\
@VV{p}V@VVV\\
S @>>> Y
\end{CD}
\ \and\ 
\begin{CD}
p\psta\sV'|_U @>>> p\psta f\psta\sF\\
@VV{\cong}V @|\\
\ti\sV\defeq p\sta \sV|_U @>>> p\sta f\sta\sF
\end{CD}
$$

We now view
$D_A\times_S U$ and $D_A'\times_{S'}U$
as subsets in
$\ti\sV$ using the isomorphisms above, and using that $U_0\to Y$ is \'etale. Since $U_0\to Y$ is \'etale, we have that
$$D_A\times_SU_0=  D_A'\times_{S'}U_0= A\times_{\sF}\ti \sV|_{U_0}  \sub \ti \sV|_{U_0}.
$$
We let $\ti D_A$ be the closure of $D_A\times_SU_0$ in $\ti \sV$. 
We let $q: \ti \sV\to \sV$ and $q': \ti\sV\to \sV'$ be the projections.

By our assumption on $h_f\in \kk(D_A)\sta$, we see that $(q|_{\ti D_A})\sta(h_f)$
descends to a rational function on $D_A'$; we denote the descent by $h_{f'}\in \kk(D_A')\sta$. 
We express
$$\partial(D_A', h_{f'})=\sum_{b\in I'} n_b'[D_b'],
$$
where $D_b'$ are integral.

Finally, we pick a $b(a)\in I'$ so that the intersection 
\beq\label{inters}
\ti D_a\defeq (D_a\times_SU)\cap (D_{b(a)}'\times_{S'}U)\sub \ti\sV,
\eeq
where we view both $D_a\times_SU$ and $D_{b(a)}'\times_{S'}U$ as subsets of $\ti\sV$ using
the isomorphisms in the square above, $\ti D_a$ dominates both $D_a$ and $D_{b(a)}'$.
Because $h_{f'}$ is the descent of $(q|_{\ti D_A})\sta(h_f)$, a direct checking shows that
such $b(a)\in I'$ exists. 

The geometric meaning of this construction mimics the fiber product over $\sF$. We let
$\bar B_a\sub \sF|_S$ be the image stack of $D_a$ under $\sV\to f\sta\sF=\sF|_S$. 
We let $B_{b(a)}'\sub \sF|_{S'}$ be the image of $D_{b(a)}'$ under $\sV'\to f\psta\sF=\sF|_{S'}$. 
Our assumption that \eqref{inters} dominates $D_a$ and $D_{b(a)}'$ ensures that
the image of $B_{b(a)}'$ under $\sF|_{S'}\to \sF$ is $B_a$. (Note that $\bar B_a$ maps to $B_a$ because $B_a$ is
the image of $D_a$.) 
This way, the degree of $\bar B_a\to B_a$ 
is the same as the degree of 
\beq\label{def-F}
\bar B_a\times_{\sF} B_{b(a)}'\lra B_{b(a)}'.
\eeq
To define this degree, we pull back the first term in \eqref{def-F} to the bundle $\ti \sV$ over $U$ to 
obtain the $\ti D_a$ in \eqref{inters}; we pull back the second term in \eqref{def-F} to $\sV'$ to obtain $D_{b(a)}'\sub \sV'$.
Thus the degree of \eqref{def-F} is the same as
$$e_a\defeq \deg( q'|_{\ti D_a}: \ti D_a\lra D_{b(a)}').
$$

Since this definition uses the fact that the degree of a map is preserved after an \'etale base change of both
the domain and the target, it implies that $e_a$ is well-defined, independent of the choice of the
\'etale cover $S'\to Y$ we pick. Since the checking is routine, we omit it here.
}

With $e_a$ defined, we define $\partial(A,h)$ using \eqref{bound}.
By linear extension, we obtain the boundary operation \eqref{Par}.

\begin{coro} We have the relation
$$s^!\circ\partial=0: W\lsta \sF\lra A\lsta X.
$$
\end{coro}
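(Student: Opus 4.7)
The plan is to reduce the identity to the classical intersection-theoretic fact that on a scheme the zero-section Gysin map of a vector bundle annihilates principal divisors, and then to match the resulting vanishing against $s^!\partial(A,h)$ using the multiplicities $e_a$ that were built into the construction of $\partial$. By linearity of $\partial$ and $s^!$ I may assume $(A,h)$ is an integral rational equivalence. Choose a proper representative $(f\colon S\to Y,\ \zeta_S\colon\sV\to f\sta\sF,\ D_A\subset\sV)$ of $A$ together with $h_f\in\kk(D_A)\sta$ representing $h$, and write $e=\deg f$.

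By definition, $\partial(D_A,h_f)=\sum_a n_a[D_a]$ in $Z\lsta\sV$ as in \eqref{par-}; because this is the divisor of a rational function on the integral scheme $D_A$, it is rationally equivalent to zero. Since the zero-section Gysin map on the scheme $\sV$ factors through rational equivalence and commutes with proper pushforward along $f$, one obtains
$$
\sum_a n_a\, f\lsta 0_\sV^![D_a]=0\qquad\text{in }A\lsta X.
$$

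It remains to identify this sum with $s^!\partial(A,h)$. Recall $B_{a,S}=\zeta_S(D_a)$, $B_a=\zeta_X(B_{a,S})$, $D_a=\zeta_S\upmo(B_{a,S})$, and $e_a=\deg(\zeta_X|_{B_{a,S}}\colon B_{a,S}\to B_a)$. I would use $(f|_{f\upmo(\overline{\pi(B_a)})},\ \sV|_{f\upmo(\overline{\pi(B_a)})},\ D_a)$ as a proper representative of $B_a$ (well-definedness of $s^!$ from the preceding proposition guarantees the answer is independent of the choice) and establish the degree identity
$$
e\cdot f\lsta 0_\sV^![D_a]=e_a\cdot s^![B_a]\qquad\text{in }A\lsta X.
$$
Combined with $\partial(A,h)=e\upmo\sum_a n_a e_a[B_a]$ from \eqref{bound}, this yields
$$
s^!\partial(A,h)=e\upmo\sum_a n_a e_a\, s^![B_a]=\sum_a n_a\, f\lsta 0_\sV^![D_a]=0,
$$
and linear extension finishes the corollary.

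The main obstacle is the displayed degree identity. The generic degree of $f$ over the proper closed substack $\overline{\pi(B_a)}\subset Y$ need not equal $e$, and the preimage $\zeta_X\upmo(B_a)\subset f\sta\sF$ may contain irreducible components besides $B_{a,S}$. I would resolve these discrepancies by the same étale-base-change comparison that shows $e_a$ is well-defined in the construction of $\partial$: pass to a common étale refinement dominating both the chosen proper representative of $B_a$ and the restriction of $(f,S,\sV)$, and match the multiplicities component by component. This reduces the claim to the elementary statement that the generically finite pushforward $B_{a,S}\to B_a$ multiplies cycles by $e_a$, while the factor $e$ is absorbed into the overall normalization of $s^!$—the same strategy used in \cite{Li2} and \cite[Sect.~3]{KL}.
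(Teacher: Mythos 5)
Your proposal follows the paper's route exactly: reduce to an integral $(A,h)$, pass to a proper representative, observe that $\partial(D_A,h_f)$ is the divisor of a rational function on the integral scheme $D_A\sub\sV$ and hence is killed by $0_\sV^!$, then match the result against $s^!\partial(A,h)$ through the multiplicities $e_a$ --- which is precisely the reduction the paper invokes before deferring to \cite{Li2} and \cite[Sect.~3]{KL}. One caveat on your displayed degree identity: it carries a spurious factor of $e$, and the triple $(f|_{f\upmo(\overline{\pi(B_a)})},\sV,D_a)$ is not literally a proper representative of $B_a$, since $D_a=\zeta_S\upmo(B_{a,S})$ is only the preimage of the single component $B_{a,S}$ rather than the closure of the full preimage of $B_a$; the relation that actually comes out of the well-definedness of $s^!$ together with the compatibility of $0^!$ with generically finite pushforward is $f\lsta 0_\sV^![D_a]=e_a\,s^![B_a]$. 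This slip is harmless here (either normalization ends the chain in a multiple of $f\lsta 0_\sV^![\partial(D_A,h_f)]=0$), but the multiplicity matching does rely essentially on the defining property of $h_f$ --- constancy on the fibers of $\iota$ --- which is what forces $D_a$ to be the full preimage of $B_{a,S}$ and makes the $e_a$ well defined.
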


\begin{proof}
We only need to check that for any integral $(A,h)\in W\lsta \sF$, we have
$(s^!\circ\partial) (A,h)=0$.
The proof is routine using proper representative of $(A,h)$, which transform this
identity to the the identity $s^!\circ\partial$ for rational equivalence in a vector bundle over a scheme.
Since the proof follows the argument in \cite{Li2} and \cite[Sect. 3]{KL}, we will omit the details here.
\end{proof}

\begin{defi-theo}\label{vir} 
Let  $\cM$ be an Artin stack locally of finite type, and let $X$ be a proper Deligne-Mumford stack 
of finite presentation. Suppose $\cM$ is smooth and of pure dimension, and 
suppose $\phi$ is
a semi-perfect relative obstruction theory of $X/\cM$.
The stated procedure produces 
a (virtual normal) cone cycle $[\fc_{X/\cM}]\in Z\lsta \Ob_\phi$. 
We define the virtual cycle of $X$ be
$$[X,\phi]\virt:=s^! [\fc_{X/\cM}]\in A_{\ast}X.
$$
\end{defi-theo}

We prove that the virtual cycle $[X,\phi_\bullet]\virt$ is deformation invariant in the sense of cycles.    
Consider a fiber-diagram (cf. \cite[Sect. 7]{BF}) of separated Deligne-Mumford stacks $X$ and $X'$ to smooth Artin stacks
$\cM$ and $\cM'$
$$
\begin{CD}
X'@>{u}>> X\\
@VVV @VVV\\
\cM'@>{v}>> \cM
\end{CD}
$$
such that $\cM$ and $\cM'$ are of pure dimensions, $u$ is representable, and $v:\cM'\to \cM$ is a regular immersion.
Let $v^!: A\lsta X\to A\lsta X'$ be the Gysin homomorphism associated to this square.

\begin{prop}\label{fiber}
A semi-perfect relative obstruction theory $\phi{}$ of $X/\cM$
induces a semi-perfect relative obstruction theory $\phi'$ of 
$X'/\cM'$, and their virtual cycles are related by
$$v^![X,\phi]\virt=[X',\phi{}']\virt.
$$
\end{prop}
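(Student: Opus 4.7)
The approach is to mirror the argument of Behrend--Fantechi \cite[Sect.~7]{BF} chart-by-chart and use the descent formalism developed in Sections~2 and~3 to globalize.

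\emph{Step 1: construct $\phi'$.} Let $\{U\lalp,\phi\lalp,\psi\lalpbe\}$ be the data defining $\phi$. Set $U'\lalp=U\lalp\times_X X'=U\lalp\times_\cM\cM'$ and let $u\lalp\mh U'\lalp\to U\lalp$ be the base change of $u$. Then $\{U'\lalp\}$ is an \'etale affine cover of $X'$. I take
$$
\phi'\lalp\mh u\lalp\sta E\lalp\lra u\lalp\sta L_{U\lalp/\cM}\lra L_{U'\lalp/\cM'},
$$
the second arrow coming from the Cartesian square. Regularity of $v$ together with the discussion in \cite[Sect.~7]{BF} shows that $\phi'\lalp$ is a truncated perfect relative obstruction theory. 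Functoriality of the obstruction-class assignment under $u\lalp$ implies that the pullbacks $u\lalpbe\sta\psi\lalpbe$ form descent data for the sheaf $\Ob_{\phi'}$ and witness the $\nu$-equivalence of $\phi'\lalp|_{U'\lalpbe}$ and $\phi'\lbe|_{U'\lalpbe}$. Thus $\phi'=\{U'\lalp,\phi'\lalp,u\lalpbe\sta\psi\lalpbe\}$ is a semi-perfect relative obstruction theory of $X'/\cM'$ with canonical identification $\Ob_{\phi'}\cong u\sta\Ob_\phi$.

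\emph{Step 2: compare cone cycles.} On each chart $U\lalp$ the Behrend--Fantechi deformation to the normal cone of $\cM'\sub\cM$ (as in the proof of \cite[Thm.~7.5]{BF}) produces a rational equivalence between $v^![\cC_{U\lalp/\cM}]$ and $[\cC_{U'\lalp/\cM'}]$ inside the bundle stack $N_{U'\lalp/\cM'}$. Pushing forward by $\eta_{\phi'\lalp\ast}$ from Section~2, and using Proposition \ref{glue-cycle} to verify compatibility on each overlap $U'\lalpbe$ via the $\nu$-equivalence established in Step~1, these local rational equivalences patch to a single rational equivalence in $W\lsta\Ob_{\phi'}$ between $u\sta[\fc_{X/\cM}]$ (the pullback induced by $\Ob_{\phi'}\cong u\sta\Ob_\phi$) and $[\fc_{X'/\cM'}]$.

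\emph{Step 3: transfer to Chow groups.} Given any proper-representative $(f\mh S\to Y\sub X,\,\sV\to f\sta\Ob_\phi,\,D_A\sub \sV)$ of an integral $A\in Z\lsta\Ob_\phi$, its base change under $u$ supplies a proper-representative of the pulled-back cycle $u\sta[A]\in Z\lsta\Ob_{\phi'}$. The commutativity of $v^!$ with the zero-section Gysin map of a vector bundle (standard Fulton formalism) yields the identity $v^!\bl s^![A]\br=s^{\prime !}(u\sta[A])$. Combined with the vanishing $s^!\circ\partial=0$ proved above and the rational equivalence from Step~2, this gives
$$
v^![X,\phi]\virt = v^! s^![\fc_{X/\cM}] = s^{\prime !}u\sta[\fc_{X/\cM}] = s^{\prime !}[\fc_{X'/\cM'}] = [X',\phi']\virt.
$$

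The main obstacle will be Step~2: showing that the chart-wise deformation-to-the-normal-cone rational equivalences assemble into a single global class in $W\lsta\Ob_{\phi'}$. The key is that on overlaps $U'\lalpbe$ the $\nu$-equivalence of $\phi'\lalp$ and $\phi'\lbe$ forces the two chart-wise pushforwards of the cone cycles to coincide by Proposition \ref{glue-cycle}, making the local rational equivalences compatible. Once this descent step is secured, Step~3 is a routine reduction through the proper-representative construction to the standard compatibility of $v^!$ with zero-section Gysin maps of vector bundles.
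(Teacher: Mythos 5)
Your Step 1 is exactly the paper's construction (pull back the charts, the obstruction theories, and the gluing isomorphisms $\psi\lalpbe$, and invoke \cite[Prop.~7.2]{BF}), and your overall architecture --- chart-wise Behrend--Fantechi plus descent via Proposition \ref{glue-cycle} --- is the paper's. But Steps 2 and 3 contain a genuine gap in how the two virtual cycles are compared. The identity $v^!\bl s^![A]\br=s^{\prime!}(u\sta[A])$ is not available for a cycle $A\in Z\lsta\Ob_\phi$: since $u$ is a closed immersion (not flat), the naive restriction $u\sta[\fc_{X/\cM}]$ is not a well-defined cycle of the correct dimension, and it is not what computes $v^!$. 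The refined Gysin map $v^!$ applied to $s^![\fc_{X/\cM}]$ is computed by a \emph{second} deformation to the normal cone, i.e.\ by the cycle $[\cC_{\cC_{X/\cM}|_{X'}/\cC_{X/\cM}}]$, and consequently the rational equivalence you need cannot live in $W\lsta\Ob_{\phi'}$: it lives one level up, in $W\lsta(L\times_{\cM'}N_{X/\cM}|_{X'})$ with $L=N_{\cM'/\cM}$, pushed into the sheaf stack $\sF=L\times_{\sO_{\cM'}}\Ob_{X/\cM}|_{X'}$.

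Concretely, the paper takes Vistoli's rational equivalence $(R,h)$ with
$$\partial(R,h)=[\cC_{\cC_{X/\cM}|_{X'}/\cC_{X/\cM}}]-[L\times_{\cM'}\cC_{X'/\cM'}],$$
pushes both sides into $\sF$ chart by chart (using Proposition \ref{glue-cycle} for descent, as you anticipate), and then identifies $s^!$ of the first term with $v^![X,\phi]\virt$ (commutativity of Gysin maps, following \cite{Li2} and \cite[Sect.~3]{KL}) and $s^!$ of the second with $s_0^![\fc_{X'/\cM'}]=[X',\phi']\virt$; the relation $s^!\circ\partial=0$ then closes the argument. So the missing ingredient in your write-up is the double-cone cycle $[\cC_{\cC_{X/\cM}|_{X'}/\cC_{X/\cM}}]$ and the auxiliary factor $L$: without them, "the rational equivalence between $u\sta[\fc_{X/\cM}]$ and $[\fc_{X'/\cM'}]$ in $W\lsta\Ob_{\phi'}$" of your Step 2 is not a statement that the deformation-to-the-normal-cone construction actually produces.
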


\begin{proof} We let $\phi{}$ be given by $\{\phi\lalp,U\lalp, E\lalp, \psi\lalpbe\}_\Lam$.
We cover $X'$ by $U\lalp'=U\lalp\times_X X'$. 
Let $u\lalp$ be the induced morphism fitting into the Cartesian square
\beq\label{U-sq}
\begin{CD}
U\lalp' @>{u\lalp}>> U\lalp\\
@VVV @VVV\\
\cM' @>{v}>> \cM.\\
\end{CD}
\eeq
For each $\alpha\in\Lam$, we let
$$\phi'\lalp=u\lalp\sta\phi\lalp :E'\lalp:=u\lalp^\ast E\lalp \mapright{}
u\lalp^\ast L_{U\lalp/\cM}\lra
L_{U'\lalp/\cM'},
$$
where the last arrow is induced from the Cartesian square \eqref{U-sq}.

According to \cite[Prop. 7.2]{BF}, $\phi\lalp'$ is a perfect obstruction theory of $U'\lalp\to\cM'$.
To form a semi-perfect obstruction theory of $X'\to\cM'$, we need
transitions $\psi\lalpbe'$. 
Over $U\lalpbe'=U\lalp'\times_{X'}U'\lbe=X'\times_X U\lalpbe$, let 
$u_{\alpha\beta}:U'\lalpbe\to U\lalpbe$
be the tautological inclusion.
We define
$$\psi\lalpbe':=(u_{\alpha\beta})^\ast (\psi\lalpbe): E'\lalp|_{U'\lalpbe}\mapright{} E'\lbe|_{U'\lalpbe}. 
$$
Since $\phi{}$ is a semi-perfect obstruction theory, one checks that
$\phi{}'=\{U\lalp',\phi'\lalp,E\lalp',\psi'\lalpbe\}$
is a semi-perfect obstruction theory of $X'/\cM'$. Since the checking is routine, we will omit the
detail.

We now prove the identity $v^![X,\phi]\virt=[X',\phi']\virt$. 
%
Let $L=N_{\cM'/\cM}$ be the normal bundle to $\cM'$ in $\cM$. Because $v:\cM'\to\cM$ is a regular closed immersion,
$V$ is a vector bundle.
By the proof of \cite[Prop. 7.2]{BF} and \cite{Kresch2}, the Vistoli's rational equivalence 
$$(R,h) \in W\lsta(L\times_\cM \cC_{X/\cM}|_{X'})\sub W\lsta( L\times_{\cM'}N_{X/\cM}|_{X'})
$$
gives the following identity as cycles:
\beq\label{rat}
\partial (R,h)=[\cC_{\cC_{X/\cM}|_{X'}/\cC_{X/\cM}}]-[L\times_{\cM'} \cC_{X'/\cM'}]\in 
Z\lsta (L\times_{\cM'}N_{X/\cM}|_{X'}).
\eeq
In case $X/\cM$ has a perfect relative obstruction theory, push-forward this relation 
to the bundle-stack of the obstruction complex proves the desired
identity $v^![X,\phi]\virt=[X',\phi']\virt$.

We now show that \eqref{rat} proves the same identity in the case of semi-perfect obstruction theory.
We let 
$$\sF\defeq L\times_{{\sO_{\cM'}}}\Ob_{X/\cM}|_{X'},
$$
which is $L\times_{\sO_{\cM'}}\Ob_{X'/\cM'}$
since $\Ob_{X'/\cM'}=\Ob_{X/\cM}|_{X'}$. 

We let 
$$\eta\lalp: L\times_{\cM'} N_{X/\cM}|_{U\lalp'}\lra \sF|_{U\lalp'}= L\times_{\cM'}\Ob_{U\lalp'/\cM'}
$$
be the morphism (of stacks) induced by
$(\LL^{\ge -1}_{U\lalp/\cM})\dual\to E\lalp\dual\to H^1(E\lalp\dual)$. 
Repeating the proof of Proposition \ref{glue-cycle}, we conclude that the collection of image cycles $\eta_{\alpha\ast}[L\times_{\cM'} \cC_{X'/\cM'}]$ 
(resp. $\eta_{\alpha\ast}[\cC_{\cC_{X/\cM}|_{X'}/\cC_{X/\cM}}|_{U\lalp'}]$)
forms a cycle in $\sF$. Obviously, the former form the cycle $L\times_{\cM'}\fc_{X'/\cM'}$;
we denote the later by $[\mathfrak b]\in Z\lsta \sF$. Letting $s_0$ be the zero section of the stack $\Ob_{X'/\cM'}$, we conclude
$$s^! [L\times_{\cM'} \fc_{X'/\cM'}]=s_0^![\fc_{X'/\cM'}]= [X',\phi']\virt\in A\lsta X'.
$$
Mimic the proof given in \cite{Li2} and \cite[Sect. 3]{KL}, we conclude that
$$s^![\mathfrak b]=v^![X,\phi]\virt\in A\lsta X'.
$$

Finally, like before we can push $R$ via $\eta_{\alpha\ast}$ to form a cycle in $\sF$; 
we then check that $h$ descends to a rational function on this cycle,
resulting a rational equivalence $\beta\in W\lsta \sF$.
Then the relation \eqref{rat} gives 
$$\partial \beta=[\mathfrak b]-[L\times_{\cM'}\fc_{X'/\cM'}]\in Z\lsta \sF.
$$
Since the argument is routine, we will omit the details here.

Combined, we have
$$v^![X,\phi]\virt=s^![L\times_{\cM'}\fc_{X'/\cM'}]=s^![\mathfrak b]-s^!(\partial\beta)=[X',\phi']\virt.
$$
This proves the Theorem.
\end{proof}

    

\section{virtual cycle of derived objects and deformation invariance}
\def\Spl{\mathrm{{Splcpx}}}
\def\labc{_{\alpha\beta\gamma}}

In this section, we construct semi-perfect obstruction theory of the moduli of derived objects
on a projective Calabi-Yau threefold. In case the moduli space has an open, proper Deligne-Mumford substack,
the virtual class of the semi-perfect obstruction defines the Donaldson-Thomas invariant of this moduli space.
This for instance apply to the moduli spaces constructed in  \cite{Jason}.

We fix a smooth family of projective Calabi-Yau threefolds $S\to B$. 
We follow the convention that for $T\to B$, we use
$p_S$ and $p_T$ to denote the projections of $S\times_B T$ to $S$ and $T$. 

We quote a theorem of Inaba \cite{Inaba}, generalized by Lieblich \cite{Lieb}.

\begin{theo}\label{thm3.1}
Let $\fD_{S/B}$ be the stack of objects $E$ in $D^b(S)$ that are relatively perfect over $B$ and that 
for all geometric points $b\in B$ we have $\Ext^{i<0}(E_b,E_b)=0$ and $\Ext^0(E_b,E_b)=\kk(b)$. Then $\fD_{S/B}$ is a 
Deligne-Mumford stack locally of finite presentation over $B$.
\end{theo}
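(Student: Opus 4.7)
The plan is to apply Artin's representability criterion to the fibered category $\fD_{S/B}$ over $B$-schemes. At a geometric point corresponding to $E$ over $b\in B$, first-order infinitesimal deformations are classified by $\Ext^1_{S_b}(E,E)$ with obstructions in $\Ext^2_{S_b}(E,E)$; both groups are finite-dimensional $\kk$-vector spaces since $E$ is relatively perfect and $S_b$ is proper. This supplies the deformation-obstruction theory required by Artin and shows $\fD_{S/B}$ admits a cotangent complex locally of finite presentation.

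The hypothesis $\Ext^{i<0}(E_b,E_b)=0$, which Lieblich calls \emph{universal gluability}, is what ensures that $\fD_{S/B}$ is an honest $1$-stack rather than a $2$-stack: an infinitesimal deformation of $E$ constructed on an \'etale cover of $S_b$ glues uniquely because the would-be higher coherence obstruction on triple overlaps lives in $\Ext^{-1}$, which vanishes. I would verify this using a local-to-global spectral sequence comparing local and global Ext-groups.

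To apply Artin's criterion I would then verify, in order, (a) compatibility with filtered colimits of rings, which follows from relative perfectness since a perfect complex is determined by finite data; (b) Schlessinger--Rim homogeneity, a direct check using derived flat base change; (c) openness of versality, standard once the obstruction theory is in place; and (d) \emph{effectivity of formal deformations}: a compatible system $\{E_n\}$ on $S\times_B\Spec(A/\fm^{n+1})$ must algebraize to a deformation on $S\times_B\Spec(\hat A)$. Step (d) is the main obstacle; it is a derived Grothendieck existence theorem, and Lieblich's approach is to represent the formal complex by a strictly perfect complex of locally free sheaves and then apply classical Grothendieck existence term-by-term while carefully tracking quasi-isomorphisms.

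Finally, the simplicity hypothesis $\Ext^0(E_b,E_b)=\kk(b)$ forces every automorphism group scheme in $\fD_{S/B}$ to be $\GG_m$ acting as scalars; thus $\fD_{S/B}$ is a $\GG_m$-gerbe over a Deligne--Mumford stack, and with the standard convention of rigidifying by this central $\GG_m$ (implicit in the cited sources) one obtains the DM stack locally of finite presentation over $B$ asserted by the theorem.
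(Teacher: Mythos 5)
The paper does not prove this statement: it is quoted verbatim from Inaba and Lieblich, so there is no internal argument to compare yours against. Judged on its own, your sketch follows the standard route (Lieblich's verification of Artin's criterion) and the outline of steps (a)--(d) is the right skeleton, with effectivity of formal deformations correctly identified as the serious step. Your final observation is also apt and in fact more careful than the theorem as quoted: with $\Ext^0(E_b,E_b)=\kk(b)$ every object carries a central $\GG_m$ of scalar automorphisms, so the literal stack of such objects is an Artin stack that only becomes Deligne--Mumford (indeed an algebraic space) after rigidifying/sheafifying away the $\GG_m$; the paper implicitly acknowledges this when it later passes to the \'etale sheafification $\fD^L_{S/B}$ of the functor $\Spl^L_{S/B}$.

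Two points in your sketch deserve more care. First, the gluability issue is not settled by a local-to-global spectral sequence at a single fiber: what Lieblich's argument needs is \emph{universal} gluability, i.e.\ vanishing of $\Ext^{i<0}_{S_T}(E_T,E_T)$ after every base change $T\to B$, and deducing this from the stated hypothesis at geometric points of $B$ requires a base-change/semicontinuity argument for relative Ext of perfect complexes; without it the fibered category need not satisfy descent and your step (b) breaks down. Second, Artin's criterion as you have invoked it produces an algebraic (Artin) stack; to land in the Deligne--Mumford world you must combine the rigidification with the fact that the rigidified automorphism groups $\Ext^0(E_b,E_b)/\kk(b)^\ast$ are trivial, hence unramified diagonal. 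Neither point is a fatal flaw --- both are handled in the cited sources --- but as written your proposal asserts rather than establishes them.
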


Recall that since $S\to B$ is smooth and projective, an $E\in D^b(S\times_B T)$ is
relatively perfect if it is locally perfect \cite{Lieb}.
In this section, we are interested in the substack of derived objects with fixed determinant line bundle. 

\begin{defi}
Let $E\in D^b(S\times_B T)$ be relatively perfect, and let $L$ be a line bundle on $S$. 
We say $\det \cE\sim p_S\sta L$ if there is a line bundle $J$
on $T$ such that
$\det E\cong p_S\sta L\otimes p_{T}\sta J$.
\end{defi}

Following \cite{Inaba}, we introduce the moduli functor
$$\Spl^L_{S/B} : (\text{Sch}/B)\lra (\text{Sets})
$$
that sends any $B$-scheme $T$ to
the set of all $E\in D^b(S\times_B T)$ satisfying the requirements in 
Theorem \ref{thm3.1} and $\det E\sim p_S\sta L$.
Applying the proof in \cite{Inaba} (see also \cite{Lieb}), its \'etale sheafification $\fD_{S/B}^L$ is a
Deligne-Mumford stack, locally of finite presentation over $B$.

We introduce the notion of semi-families and show that $\fD_{S/B}^L$ admits a 
universal semi-family.

\begin{defi}\label{semi-family}
For any Deligne-Mumford stack $X\to B$, a semi-family of derived objects in $D^b(S)$ on $X$ 
consists of an \'etale open covering
$\{U\lalp\}_\Lambda$ of $X$, derived objects $E\lalp\in D^b(S\times_B U\lalp)$, and 
quasi-isomorphisms
$$f\lalpbe: E\lalp|_{S\times_B U\lalpbe} \cong E\lbe|_{S\times_B U\lalpbe}\quad \mathrm{in}\quad D^b(S\times_B U\lalpbe)
$$
that satisfy the semi-cocycle condition: for any triple $(\alpha,\beta,\gamma)$ in $\Lambda$, there is a $c_{\alpha\beta\gamma}\in
\Gamma(\cO_{U_{\alpha\beta\gamma}}\sta)$ such that
\beq\label{co-3}
\pi_{\gamma\alpha}\sta f_{\gamma\alpha}\circ \pi_{\beta\gamma}\sta f_{\beta\gamma}\circ \pi\lalpbe\sta f\lalpbe
=c_{\alpha\beta\gamma}\cdot \id: E\lalp|_{S\times_B U_{\alpha\beta\gamma}}  \lra  E\lalp|_{S\times_B U_{\alpha\beta\gamma}} , 
\eeq
where $\pi\lalpbe: S\times_B U_{\alpha\beta\gamma}\to S\times_B U\lalpbe$ is the tautological projection.
\end{defi}

\begin{defi}
Let $X\sub \fD_{S/B}^L$ be an open substack.  A universal semi-family over $X$
consists of a semi-family $\{E\lalp, f\lalpbe\}_\Lam$ on $X$
of which the following holds: for any scheme $T$ over $B$ and any object
$F\in \Spl_{S/B}^L(T)$ such that the induced morphism $T\to \fD_{S/B}^L$ (induced by $F$) factors through $X\sub 
\fD_{S/B}^L$, then there is an \'etale cover $\{T\lalp\}_\Lam$ of $T$ (indexed by the same $\Lam$ in
$\{U\lalp\}_\Lam$) and morphisms $\varphi\lalp: T\lalp\to U\lalp$ such that
$$(1_S\times \varphi\lalp)\sta E\lalp\cong F|_{S\times_B U\lalp}.
$$
\end{defi}

\begin{prop}
Let $X\sub \fD_{S/B}^L$ be an open and closed substack. Then $X$ admits a universal semi-family.
\end{prop}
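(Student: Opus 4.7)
The plan is to produce the semi-family by refining an \'etale cover until each piece lifts to an honest object of $\Spl_{S/B}^L$, then extract the gluing quasi-isomorphisms and the cocycle constants from the simplicity hypothesis. First, I would choose an \'etale cover $\{U\lalp\}_{\alpha\in\Lam}$ of $X$ by affine schemes. Since $\fD_{S/B}^L$ is the \'etale sheafification of $\Spl_{S/B}^L$, each classifying morphism $U\lalp\to X\sub \fD_{S/B}^L$ is represented \'etale locally by a genuine relatively perfect complex; after refining the cover (using that $X$ is open in $\fD_{S/B}^L$) I may assume there is an object $E\lalp\in \Spl_{S/B}^L(U\lalp)$, that is, a relatively perfect complex on $S\times_B U\lalp$ with $\det E\lalp\sim p_S\sta L$.

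Second, I would produce the quasi-isomorphisms on overlaps. Over $U\lalpbe=U\lalp\times_X U\lbe$ the two complexes $E\lalp|_{S\times_B U\lalpbe}$ and $E\lbe|_{S\times_B U\lalpbe}$ induce the same morphism to $\fD_{S/B}^L$, so after refining the cover once more there exist honest quasi-isomorphisms
$$f\lalpbe\colon E\lalp|_{S\times_B U\lalpbe}\lra E\lbe|_{S\times_B U\lalpbe}\quad \text{in}\quad D^b(S\times_B U\lalpbe).$$

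The main technical step, and the one I expect to be the crux, is the semi-cocycle relation \eqref{co-3}. Fix a triple $(\alpha,\beta,\gamma)$ and form
$$h\labc\defeq \pi_{\gamma\alpha}\sta f_{\gamma\alpha}\circ \pi_{\beta\gamma}\sta f_{\beta\gamma}\circ \pi\lalpbe\sta f\lalpbe,$$
a self-quasi-isomorphism of $E\lalp|_{S\times_B U_{\alpha\beta\gamma}}$. Since $E\lalp$ is relatively perfect and each fiber satisfies $\Ext^0(E_b,E_b)=\kk(b)$, cohomology and base change shows that the $H^0$ of the pushforward of the relative endomorphism complex is canonically $\sO_{U\labc}$, with the identity of $E\lalp|_{U\labc}$ as trivializing section. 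Hence every self-quasi-isomorphism is multiplication by an invertible function, so $h\labc=c\labc\cdot\id$ for a unique $c\labc\in\Gamma(U\labc,\sO_{U\labc}\sta)$, which is exactly \eqref{co-3}.

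Finally, the universal property follows from the construction. Given $F\in \Spl_{S/B}^L(T)$ whose classifying map factors through $X$ (which is well-defined since $X$ is both open and closed in $\fD_{S/B}^L$), an \'etale refinement of $T\to X$ along the chosen cover yields maps $\varphi\lalp\colon T\lalp\to U\lalp$, and then $(1_S\times\varphi\lalp)\sta E\lalp$ and $F|_{S\times_B T\lalp}$ represent the same element of $\Spl_{S/B}^L(T\lalp)$ up to \'etale refinement, hence are quasi-isomorphic. No new obstruction appears here; the main obstacle is the scalar-multiplication assertion, which rests crucially on the simpleness and relative-perfection hypotheses of Theorem \ref{thm3.1}.
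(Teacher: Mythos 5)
Your proof is correct and follows essentially the same route as the paper, whose entire argument is the one-line remark that $\fD_{S/B}^L$ is the \'etale sheafification of $\Spl_{S/B}^L$; you have simply unpacked what that entails, including the key point that simplicity ($\Ext^{<0}=0$, $\Ext^0(E_b,E_b)=\kk(b)$) plus cohomology and base change forces every self-quasi-isomorphism to be a scalar, which yields the semi-cocycle constants $c_{\alpha\beta\gamma}$.
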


\begin{proof}
The proof follows from that $\fD_{S/B}^L$ is a sheafification of $\Spl_{S/B}^L$.
\end{proof}

Using universal semi-families of $X$, and applying the Atiyah class constructed by
Huybrecht-Thomas \cite{HT}, we construct semi-perfect obstruction theory of $X/B$.

\begin{lemm}\label{lem3.6}
Let $X\sub \fD_{S/B}^L$ be an open substack. Then $X\to B$ has a 
semi-perfect relative obstruction theory given by the Atiyah class constructed in \cite{HT}.
\end{lemm}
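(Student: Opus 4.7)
The plan is to use the universal semi-family $\{E\lalp, f\lalpbe\}_\Lambda$ from the preceding proposition to build local Atiyah-class obstruction theories \'a la \cite{HT}, and then to show that the scalar ambiguity in the semi-cocycle condition \eqref{co-3} disappears after passing to the endomorphism complex, so that the individual obstruction sheaves glue.

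First, for each $\alpha$ let $\pi\lalp:S\times_B U\lalp\to U\lalp$ denote the projection. The truncated Atiyah class of $E\lalp$ composed with the canonical map $\LL_{S\times_B U\lalp/B}\to \pi\lalp\sta \LL_{U\lalp/B}$, combined with relative Serre duality on the Calabi--Yau threefold family as in \cite{HT}, produces a morphism
$$\phi\lalp:F\lalp := R\pi_{\alpha\ast}R\sH om(E\lalp,E\lalp)_0[2]\lra \LL_{U\lalp/B}^{\bu\ge -1},$$
which is a perfect relative obstruction theory of amplitude $[-1,0]$ whose obstruction sheaf is $\Ob_{\phi\lalp}=\cE xt^2_{\pi\lalp}(E\lalp,E\lalp)_0$. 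The assumption $\det E\lalp\sim p_S\sta L$ legitimates the passage to the trace-free summand $(-)_0$, which kills the degree $0$ extensions of a simple object and forces $F\lalp$ to live in amplitude $[-1,0]$.

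Next, each quasi-isomorphism $f\lalpbe$ induces by conjugation an isomorphism
$\mathrm{Ad}(f\lalpbe): R\sH om(E\lalp,E\lalp)_0|_{U\lalpbe}\xrightarrow{\sim} R\sH om(E\lbe,E\lbe)_0|_{U\lalpbe}$
in $D^b(S\times_B U\lalpbe)$; applying $R\pi\lsta[2]$ and taking $H^1$ of the dual produces the required
$$\psi\lalpbe: H^1(F\lalp\dual)|_{U\lalpbe}\xrightarrow{\sim} H^1(F\lbe\dual)|_{U\lalpbe}.$$
The essential observation is that conjugation by a nowhere-vanishing scalar on $R\sH om(E,E)$ is the identity, so the scalars $c_{\alpha\beta\gamma}\in\Gamma(\sO\sta_{U_{\alpha\beta\gamma}})$ in \eqref{co-3} disappear under $\mathrm{Ad}(\cdot)$, and the semi-cocycle condition collapses to the honest cocycle $\psi_{\gamma\alpha}\circ\psi_{\beta\gamma}\circ\psi\lalpbe=\id$. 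Hence $\{H^1(F\lalp\dual),\psi\lalpbe\}_\Lambda$ forms a descent datum for a coherent sheaf $\Ob_\phi$ on $X$, verifying condition (1) of Definition \ref{g-semi}.

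Finally, for the $\nu$-equivalence of condition (2), take a closed point $p\in U\lalpbe$ and an infinitesimal lifting problem $(g,T,T')$ as in \eqref{lift}. The intrinsic obstruction $\omega(g,T,T')\in T^1_{p,X/B}\otimes I$ is independent of the chart, and the Huybrechts--Thomas construction realizes $\ob(\phi\lalp,g,T,T')\in \Ext^2(E\lalp|_p,E\lalp|_p)_0\otimes I$ as the image of $\omega$ under the map induced by the Atiyah class of $E\lalp|_p$. The main technical step, and the one I expect to be the principal obstacle, is to verify that $\mathrm{Ad}(f\lalpbe|_p)=\psi\lalpbe|_p$ intertwines $\ob(\phi\lalp,g,T,T')$ and $\ob(\phi\lbe,g,T,T')$. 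This should follow from the naturality of the Atiyah class under the quasi-isomorphism $f\lalpbe|_p: E\lalp|_p\simeq E\lbe|_p$: the Kodaira--Spencer-type pairing defining the obstruction class is functorial in the derived object, and the trace-free part of $\mathrm{Ad}(f\lalpbe|_p)$ is precisely the map identifying the two Atiyah classes at $p$. With this in hand, both conditions of Definition \ref{g-semi} are satisfied, and the data $\{U\lalp,\phi\lalp,F\lalp,\psi\lalpbe\}_\Lambda$ is the desired semi-perfect relative obstruction theory of $X/B$.
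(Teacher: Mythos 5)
Your proposal follows essentially the same route as the paper: local Huybrechts--Thomas obstruction theories built from the universal semi-family, transition isomorphisms obtained by conjugating with $f\lalpbe$, the observation that scalar automorphisms act trivially on the traceless endomorphism complex so the semi-cocycle condition \eqref{co-3} descends to an honest cocycle on the obstruction sheaves, and $\nu$-equivalence from the compatibility of the Atiyah-class maps with these transitions (the paper packages this last point as the commutativity of the square involving $g\lalpbe$ and the two $\phi$'s over $L_{U\lalpbe/B}$, from which Definition \ref{def1.8} follows formally). The argument is correct and no further comparison is needed.
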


\begin{proof}
Let $\{E\lalp,f\lalpbe\}_\Lam$ be a universal semi-family. Using that it is universal locally, we conclude that for any closed 
$x\in U\lalp$, the family $E\lalp$ restricted
to the formal completion of $U\lalp$ at $x$ is the universal family of the hull of the infinitesimal deformations of $x$ in $U\lalp$. Thus applying
the work of Huybrecht-Thomas \cite{HT}, after fixing a closed embedding $U\lalp\to W\lalp$ into a $B$-smooth scheme $W\lalp$
and presenting the truncated cotangent complex of $U\lalp\to B$ as
$$L_{U\lalp/B}=[I_{U\lalp\sub W\lalp}/I^2_{U\lalp\sub W\lalp}\to \Omega_{W\lalp/B}|_{U\lalp}],
$$
we obtain a perfect relative obstruction theory
$$\phi\lalp: F\lalp\defeq \text{RHom}_{\cO_{U\lalp}}(E\lalp, E\lalp)_0\lra L_{U\lalp/B},
$$
where the subscript $0$ stands for the traceless part. 

Using $f\lalpbe$, we obtain an isomorphism
$g\lalpbe$ as shown making the square 
$$
\begin{CD}
F\lalp|_{U\lalpbe} @>{\phi\lalp|_{U\lalpbe}}>> L_{U\lalpbe/B}= L_{U\lalp/B}|_{U\lalpbe} \\
@VV{g\lalpbe}V@|\\
F\lbe|_{U\lalpbe}  @>{\phi\lbe|_{U\lalpbe}}>> L_{U\lalpbe/B}= L_{U\lbe/B}|_{U\lalpbe} 
\end{CD}
$$
commutative in $D(U\lalpbe)$.
Let 
$$\psi\lalpbe\defeq H^1(g_{\beta\alpha}\dual): H^1(F\lalp\dual)|_{U\lalpbe}\lra H^1(F\lbe\dual)|_{U\lalpbe}.
$$
Because a scaling automorphism $c\lalp\cdot \id: E\lalp\to E\lalp$, $c\lalp\in \Gamma(\cO_{U\lalp}\sta)$,
induces the identity automorphism of 
$\text{RHom}_{\cO_{U\lalp}}\!(E\lalp, E\lalp)_0$, 
the cocycle condition \eqref{co-3} implies that for 
any triple indices $(\alpha,\beta,\gamma)$, pullback to $U_{\alpha\beta\gamma}$ we obtain
identity
$$\psi_{\gamma\alpha}|_{U\labc}\circ  \psi_{\beta\gamma}|_{U\labc}\circ 
\psi_{\alpha\beta}|_{U\labc}=\id.
$$
This proves that $\{U\lalp,\phi\lalp,E\lalp,\psi\lalpbe\}_\Lam$ is a semi-perfect relative obstruction theory of $X\to\cM$.
\end{proof}

\begin{defi-theo}
In case $B$ is a closed point, $S$ is a smooth projective Calabi-Yau threefold and $L$ is a line bundle on $S$,
for any proper, open and closed
substack $X\sub \fD_S^L$, we define its associated Donaldson-Thomas invariant be $\deg [X]\virt$,
where $[X]\virt$ is the virtual cycle constructed by applying Definition-Theorem \ref{vir} to the semi-perfect obstruction theory of
$X$ constructed in Lemma \ref{lem3.6}.
\end{defi-theo}

Back to the case of a smooth family of projective Calabi-Yau $S\to B$ over a smooth base $B$,
we let $X\sub \fD_{S/B}^L$ be a $B$-proper, open and closed substack.
For any closed point $b\in B$, we denote $X_b=X\times_B b$, denote $i_b: X_b\to X$ the inclusion and $i_b^!:
A\lsta X\to A\lsta X_b$ the
Gysin map.

\begin{coro} Let the situation be as stated, then 
$i_b^! [X]\virt=[X_b]\virt$.
\end{coro}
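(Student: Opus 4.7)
The plan is to deduce the corollary from the deformation invariance result, Proposition \ref{fiber}. Two things must be checked: that its hypotheses apply to the Cartesian square obtained by pulling $X\to B$ back to the closed point $b\in B$, and that the resulting pulled-back semi-perfect obstruction theory on $X_b$ coincides with the semi-perfect obstruction theory $\phi_b$ that Lemma \ref{lem3.6} produces directly from $X_b\subset \fD_{S_b}^{L|_{S_b}}$.

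For the first point, since $B$ is smooth and $b\in B$ is a closed point, $i_b: b\hookrightarrow B$ is a regular closed immersion; the morphism $X\to B$ is representable (as $X$ is Deligne-Mumford and $B$ is a scheme), and $X$ is $B$-proper by hypothesis. Thus Proposition \ref{fiber} applies to the Cartesian square
\begin{equation*}
\begin{CD}
X_b @>{u}>> X \\
@VVV @VVV \\
b @>{i_b}>> B,
\end{CD}
\end{equation*}
and produces a semi-perfect relative obstruction theory $\phi'$ on $X_b/b$ together with the identity $i_b^![X,\phi]\virt=[X_b,\phi']\virt$, where $\phi=\{\phi\lalp,U\lalp,E\lalp,\psi\lalpbe\}_\Lam$ is the semi-perfect obstruction theory of $X/B$ from Lemma \ref{lem3.6}.

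For the second point I would trace the Atiyah-class construction of \cite{HT} through the base change. If $\{E\lalp,f\lalpbe\}_\Lam$ is a universal semi-family for $X$ on the \'etale cover $\{U\lalp\}$, then its restriction $\{E\lalp|_{S_b\times_B U\lalp'},\,f\lalpbe|_{S_b\times_B U\lalpbe'}\}_\Lam$ to the cover $\{U\lalp':=U\lalp\times_B b\}$ of $X_b$ is a universal semi-family on $X_b$; this is immediate from the universal property and from the fact that $X_b$ is the $i_b$-fiber of the open substack $X\subset \fD_{S/B}^L$. By naturality of $\mathrm{RHom}$ under the flat base change $S\times_B U\lalp'\to S\times_B U\lalp$ and of the Atiyah class, the pulled-back Atiyah-class map $u\lalp\sta\phi\lalp$ (composed with $u\lalp\sta L_{U\lalp/B}\to L_{U\lalp'/b}$) coincides with the Atiyah-class perfect obstruction theory of the restricted semi-family on $U\lalp'/b$; the same naturality applied to the quasi-isomorphisms $f\lalpbe$ shows that the gluing isomorphisms $\psi\lalpbe$ pull back to the gluing data attached to the restricted semi-family. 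Hence $\phi'=\phi_b$ as semi-perfect obstruction theories, and combined with the identity from Proposition \ref{fiber} this yields $i_b^![X]\virt=[X_b]\virt$.

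The main obstacle will be making the naturality of the Huybrechts-Thomas construction sufficiently precise to conclude the equality $\phi'=\phi_b$ on the nose (rather than merely a $\nu$-equivalence), in particular its interaction with the traceless projection and with the semi-cocycle gluing of Definition \ref{semi-family}. The underlying ingredients, flat base change for $\mathrm{RHom}$ and naturality of the truncated Atiyah class, are standard, but the verification requires carefully propagating these through the construction in the proof of Lemma \ref{lem3.6}.
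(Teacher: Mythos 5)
Your proof is correct and follows the same route as the paper, which simply invokes Proposition \ref{fiber}; your additional verification that the hypotheses hold and that the pulled-back obstruction theory agrees with the intrinsic one on $X_b$ is exactly the content the paper leaves implicit. Note that your final worry is not a real obstacle: you do not need $\phi'=\phi_b$ on the nose, since by Proposition \ref{glue-cycle} a $\nu$-equivalence of the two obstruction theories already forces the cone cycles, and hence the virtual classes, to coincide.
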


\begin{proof}
This follows from Proposition \ref{fiber}.
\end{proof}

This confirms the deformation invariance of Donaldson-Thomas invariant of proper moduli of
derived objects.

\vsp\section{Further comments}

One can define the same Donaldson-Thomas invariant of a 
moduli of derived objects
using Behrend's weighted Euler number \cite{Beh}, based on Huybrecht-Thomas' construction of perfect obstruction
theory \cite{HT}. However,
semi-perfect obstruction theory proves that the Donaldson-Thomas invariant of derived objects is deformation invariant. 
\vsp

We can weaken the assumption on semi-perfect obstruction theory by replacing (1) in
Definition \ref{g-semi} by that restricting $\Ob_{\phi\lalp}$ to the reduced part $(U\lalp)\lred$
of $U\lalp$ descend to a sheaf on the reduced part of $X$; the item (2) is unchanged since $\nu$-equivalent only requires the
restriction of the obstruction sheaves to the reduced part of the Deligne-Mumford stack. The results of
this paper hold true in this weaker version of semi-perfect obstruction theory. Since we do not see
immediate application of this, for notational simplicity, in the end we phrase the semi-perfect obstruction theory
relying on the full obstruction sheaf of the stack.

\vsp
In many applications,  the $\psi_{\alpha\beta}$ in  (\ref{g-semi}) are induced from
quasi-isomorphisms
$$\Psi\lalpbe: E\lalp|_{U_{\alpha\beta}}\mapright{} E_\beta|_{U\lalpbe}.
$$
If we further assume that these quasi-isomorphisms form a descent data for the stacks
$h^1/h^0(E\lalp\dual)$, namely there is a two-term perfect complex $E$ on $X$ such that
$E\lalp\cong E|_{U\lalp}$ and the quasi-isomprhism $\Psi\lalpbe$ is isomorphic to that induced by the identity
map of $E$, then we can use intersection theory on bundle stacks to define the virtual cycles.
Indeed, by assumption, $\Ob_\phi\cong H^1(E\dual)$, which induces the coarse moduli functor 
$h^1/h^0(E\dual) \to\Ob_{\phi}$. 
For the cycle $[\fc_{X/\cM}]\in Z\lsta \Ob_\phi$, by expressing it as rational combination of integral cycles 
$[\fc_{X/\cM}]=\sum_i n_i [\fc_i]$, we define $\ti\fc_i=\fc_i\times_{\Ob_\phi}h^1/h^0(E\dual)$, where
each $\ti\fc_i\in Z\lsta h^1/h^0(E\dual)$ is an integral cycle, and define
$$[\ti\fc_{X/\cM}]=\sum_i n_i[\ti\fc_i] \in Z\lsta h^1/h^0(E\dual).
$$
Let $\ti s$ be the zero section of $h^1/h^0(E\dual)$; using the Gysin map $\ti s^!$ defined in \cite{Kre},
we obtain
$$\ti s^![\ti\fc_{X/\cM}]=s^! [\fc_{X/\cM}]\in A_{\ast}X.
$$
This way, the intersection theory on Artin stacks can be applied directly.
\vsp

The last comment is on the relation of the perfect obstruction theory formulated 
by Tian and the second named author in \cite{LT} with the semi-perfect obstruction theory defined in this paper.
It can be shown that a ``family Kuranishi model" over an affine chart $U\lalp\to X$ constructed in \cite{LT}
induces a perfect obstruction $\phi\lalp: E\lalp\to L_{U\lalp/\cM}$ as formulated in \cite{BF}.
The $\nu$-equivalence over intersection $U\lalpbe$ follows from the definition of the perfect obstruction
theory in \cite{LT}. It is in this sense we say that the semi-perfect obstruction theory is a mixture of
the two versions of perfect obstruction theories formulated in \cite{LT} and \cite{BF}.

\bibliographystyle{amsplain}

\end{document}